\newtheorem{theorem}{\bf Theorem}[section]
\newtheorem{corollary}[theorem]{\bf Corollary}
\newtheorem{proposition}[theorem]{\bf Proposition}
\newtheorem{definition}[theorem]{\bf Definition}
\newcommand{\qed}{\hfill $\square$ \bigskip}
\newcommand{\Fib}{{\cal F}}
\newcommand{\Luc}{{\cal L}}
\newcommand{\supp}{{\rm supp}}
\newcommand{\imb}{{\rm imb}}
\newcommand{\irr}{{\rm irr}}
\newcommand{\A}{{\mathcal A}}
\newcommand{\st}{ ~|~ }
\begin{document}
\modulolinenumbers[5]
%\linenumbers commentaires a supprimer pour faire apparitre les lignes
\title{Some new results about Fibonacci $p$-cubes}

\author{
Michel Mollard\footnote{Institut Fourier, CNRS, UMR 5582, Universit\'e Grenoble Alpes, CS 40700, 38058 Grenoble Cedex 9, France}
}
\date{\today}
\maketitle

\begin{abstract}
The Fibonacci cube $\Gamma_n$ is the subgraph of the hypercube $Q_n$ induced by vertices with no consecutive $1$s. Recently Jianxin Wei and  Yujun Yang introduced a one parameter generalization, Fibonacci $p$-cubes $\Gamma_n^p$, which are subgraphs of  hypercubes induced by strings where there is at least $p$ consecutive $0$s between  two $1$s. In this paper we first prove the expression conjectured by the authors for the cube polynomial of $\Gamma_n^p$. By a totally different method  we then determine a generalization, the distance cube polynomial. We also complete the invariants investigated in the original paper by two new ones, the Mostar index $\mathit{Mo}(\Gamma_n^p)$ and the Irregularity $\irr(\Gamma_n^p)$.
\end{abstract}

\noindent
{\bf Keywords:} Hypercube, Fibonacci cube, Fibonacci $p$-cube, Daisy cubes, Cube polynomial, Mostar Index, Irregularity. 

\noindent
{\bf AMS Subj. Class. }:  05C12, 05C30, 05C31, 05C60, 05C90

%%%%%%%%%%%%%%%%%%%%%%%%%%%%%%%%%%%%%%%%%%%%%%%%%%%%%%%%%%
\section{Introduction }

The {\em Fibonacci cube} of dimension $n$, denoted as $\Gamma_n$, is the subgraph of the hypercube $Q_n$  induced by vertices with no consecutive $1$s. 

Fibonacci cubes\ were introduced in 1993 by W.-J.~Hsu\ as a model for interconnection networks~\cite{H-1993a}, but these graphs have found numerous applications elsewhere and are also extremely interesting in their own right. In 2013, a review article on Fibonacci cubes was written by 
Sandi Klav\v{z}ar~\cite{K-2013a} and recently a full book about them has been published~\cite{EKM-2023}.

Structural properties of these graphs have been widely studied.
Fibonacci cubes also play a role in mathematical chemistry. Indeed  they are precisely the resonance graphs of fibonacenes  an important class of hexagonal chains~\cite{KZ-2005a}.
 %Later in~\cite{ZB-2012} a similar connection have been found  between Lucas cubes and the resonance graphs of  cyclic
%polyphenanthrenes, which are related to non-cyclic fibonacenes.
Fibonacci cubes belong to daisy cubes~\cite{KM-2019a} a familly of isometrical subgraphs of hypercubes.
The connections between daisy cubes and  resonance graphs of  catacondensed even ring systems have been explored in~\cite{BTZ-2020}.

Not only the investigation of the 
properties of Fibonacci cubes attracted many researchers, but it
has also led to the development of a variety of interesting 
generalizations and variations covered by a whole chapter in the book
\cite{EKM-2023}. Among these families of graphs are Lucas cubes, 
generalized Fibonacci cubes, Pell graphs, 
$k$-Fibonacci cubes, daisy cubes and Fibonacci-run graphs.

Recently~\cite{WY-2022} Jianxin Wei and  Yujun Yang introduced a one parameter generalization, Fibonacci $p$-cubes $\Gamma_n^p$  which are subgraphs of  hypercubes induced by strings where there is at least $p$ consecutive $0$s between  two $1$s. 
The authors investigated some basic  properties like the numbers of vertices and edges, the diameter, the radius and center. They proposed a conjecture about the expression of the cube polynomial $C_{\Gamma_n^p}(x)$ and various  questions for further investigation.

This paper is organized as follows. After  preliminaries in Section~\ref{sec:basic}, we prove the conjecture about $C_{\Gamma_n^p}(x)$ in Section~\ref{sec:cubepolynomial} and determine a generalization, the distance cube polynomial $D_{\Gamma_n^p}(x,q)$, in Section~\ref{sec:distancecube}. We then consider other invariants, Wienner and Mostar indices in Section~\ref{sec:WM Index}, irregularity $\irr(\Gamma_n^p)$ in Section~\ref{sec:Irr}.

	%%%%%%%%%%%%%%%%%%%%%%%%%%%%%%%%%%%%%%%%%%%%%%%%%%%%%%%%%%
%%%%%%%%%%%%%%%%%%%%%%%%%%%%%%%%%%%%%%%%%%%%%%%%%%%%%%%%%%
\section{Preliminaries}
\label{sec:basic}
%%%%%%%%%%%%%%%%%%%%%%%%%%%%%%%%%%%%%%%%%%%%%%%%%%%%%%%%%%
%%%%%%%%%%%%%%%%%%%%%%%%%%%%%%%%%%%%%%%%%%%%%%%%%%%%%%%%%%
We will next give  some concepts and notations needed in this paper.

We denote by $[a,n]$ the set of integers $i$ such that $a\leq i \leq n$.

Let $(F_n)_{n\geq0}$ be the \emph{Fibonacci numbers}:
$F_0 = 0$, $F_1=1$, $F_{n} = F_{n-1} + F_{n-2}$ for $n\geq 2$.

Let $B=\{0,1\}$. If will be convenient to identify elements $u = (u_1,\ldots, u_n)\in B^n$ and strings of length $n$ over $B$. We thus  briefly write $u$ as $u_1\ldots u_n$ and call $u_i$ the $i$th coordinate of $u$. For $j\in[1,n]$ the string  $u+\delta_j$ will be the string $v$ such that $v_j\neq u_j$ and $v_i=u_i$ for all $i\neq j$.
We will use the power notation for the concatenation of bits, for instance $0^n = 0\ldots 0\in B^n$.

The vertex set of $Q_n$, the \emph{hypercube of dimension $n$},  is the set $B^n$, two vertices being adjacent iff they differ in precisely one coordinate. We will say that an edge $uv$ of $Q_n$ uses the direction $i$ if $u$ and $v$ differ in the coordinate $i$, thus if $v=u+\delta_i$. 

The \emph{distance} between two vertices $u$ and $v$ of  a graph $G$  is the 
number of edges on a shortest shortest $u,v$-path. It is immediate that the distance between two vertices of $Q_n$ is the number of coordinates the strings differ, sometime called Hamming distance.

A {\em Fibonacci string} of length $n$ is a binary string without consecutive 1s. We will call ${\cal F}_n$ the set of Fibonacci strings of length $n$.

The {\em Fibonacci cube} $\Gamma_n$ ($n\geq 1$) is the subgraph of $Q_n$ induced by $\Fib_{n}$ the set of Fibonacci strings of length $n$. Because of the empty string $\lambda$, $\Gamma_0 = K_1$. 

It is well known that for any integer $n$, $|\Fib_{n}|=|V(\Gamma_{n})|=F_{n+2}$.

Many generalizations of Fibonacci cubes have been proposed. Among them daisy cubes we will meet in section~\ref{sec:distancecube} and Fibonacci $(p,r)$-cubes~\cite{EA-1997}. Recently Jianxin Wei and  Yujun Yang studied specifically Fibonacci $p$-cubes which are Fibonacci $(p,r)$-cubes with $r=1$~\cite{WY-2022}. 

For an integer $p\geq1$, A {\em Fibonacci $p$-string} of length $n$ is a binary string where  consecutive 1s are separated by at least $p$ 0s.
 Let ${\cal F}^p_n$ be the set of Fibonacci $p$-strings of length $n$. Then the {\em Fibonacci $p$-cube}, $\Gamma^p_n$ is the subgraph of $Q_n$ induced by $\Fib^p_{n}$. Again because of the empty string  $\Gamma^p_0 = K_1$. Note that the Fibonacci $1$-cube $\Gamma^1_n$ is the classical Fibonacci cubes $\Gamma_n$.

Similarly a binary string $u$ of length $n$ is a {\em Lucas $p$-string} if there are at least $p$ 0s between two 1s of $u$ in a circular manner. The {\em Lucas $p$-cube}, $\Luc^p_{n}$,  is the subgraph of $Q_n$ induced by Lucas $p$-strings.

Let $(F^p_{n})_{n\geq 0}$ be the \emph{Fibonacci $p$-numbers} defined by the recursion
\begin{equation}
F^p_0 = 0, F^p_i=1\text{ for }i\in[1,p]\text{, and }F^p_{n} = F^p_{n-1} + F^p_{n-p-1}\text{ for }n\geq p+1. 
\end{equation}

From this definition, by the usual method, the generating function of the sequence $(F^p_{n})_{n\geq 0}$ is 
$$\sum_{n\geq 0}F^p_{n}x^n=\frac{x}{1-x-x^{p+1}}.$$

It is immediate that $F^p_{p+1}=1$ and more generally $F^p_{n}=n-p$ for $n\in [p+1,2p+2]$. Note that the $(F^1_{n})_{n\geq 0}$ are the Fibonacci numbers.

As noticed in \cite[Theorem~4.1]{WY-2022} the order of $\Gamma^p_n$ is $|{\cal F}^p_n|=F^p_{n+p+1}$.

Indeed  a string in ${\cal F}^p_n$, $n\geq p+1$, can be be uniquely decomposed as the concatenation of $10^p$ with a string of ${\cal F}^p_{n-p-1}$ or as the concatenation of $0$ with a string of ${\cal F}^p_{n-1}$. Furthermore $|{\cal F}^p_n|=n+1=F^p_{n+p+1}$ for $n\leq p$, thus by induction  $|{\cal F}^p_n|=F^p_{n+p+1}$ for any $n$.

The {\em Hamming weight} $w(b)$ of a binary string $b$ is  the number of occurrences of $1$ in $b$. 
It is immediate that the number of strings in ${\cal B}_n$ of Hamming weight $w$ is $\binom{n}{w}$.

%In this paper we use the combinatorial definition of binomial coefficients\index{subject}{binomial coefficients} given in~\cite{GKP-1989}. In particular 
%$\binom{n}{0}=1$ and $\binom{n}{k}=0$ for all $n,k\in \ZZ$ with $k<0$. Some computer algebra systems use a generalization to negative arguments of binomial coefficients coming from the 
%Gamma function and 
%can give non-zero value for $k<0$. Our  conventions simplify the issues that arise with summation bounds.

Let $u\in {\cal F}^p_n$
with $w(u)= w$. Concatenate $u$ with $0^p$. 
In the string $u0^p$ every $1$ is followed by a substring $0^p$. 
Therefore $u0^p$ is a string over the alphabet $\{0,10^p\}$ and this decomposition is unique.
Replace every substring $10^p$ in $u0^p$ by the character $\ast$. 
By this way we obtain a string $\theta(u)$ 
over the alphabet $\{0,\ast\}$ of length $n+p-wp$ which uses exactly $w$ characters $\ast$. 
It is immediate that $\theta$ is a bijection. Thus we have

\begin{proposition}\label{prop:Hammingweight}
The number of strings in ${\cal F}^p_n$ of Hamming weight $w$ is 
$\binom{n-wp+p}{w}$ and the  maximum weight of a Fibonacci $p$-string of length $n$ is $\left\lfloor \frac{n+p}{p+1}\right\rfloor$.
\end{proposition}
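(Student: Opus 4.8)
The plan is to exhibit an explicit bijection $\theta$, exactly as sketched in the paragraph preceding the statement, and then read off both claims as consequences. First I would fix $u\in\Fib^p_n$ with $w(u)=w$ and form the padded string $u0^p$; the key observation is that in $u0^p$ every occurrence of $1$ is the start of a block $10^p$ (here we use that $u$ itself, being a Fibonacci $p$-string, has at least $p$ zeros after every $1$, and the appended $0^p$ takes care of a possible trailing $1$). Hence $u0^p$ parses uniquely over the alphabet $\{0,\,10^p\}$, and replacing each block $10^p$ by a single symbol $\ast$ produces a word $\theta(u)$ over $\{0,\ast\}$. Counting letters: the $w$ blocks $10^p$ consume $w(p+1)$ positions of $u0^p$, which has length $n+p$, so $\theta(u)$ has $w$ stars and $n+p-w(p+1)$ zeros, i.e.\ total length $n+p-w(p+1)+w = n-wp+p$.

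Next I would argue that $\theta$ is a bijection from $\{u\in\Fib^p_n : w(u)=w\}$ onto the set of $\{0,\ast\}$-words of length $n-wp+p$ with exactly $w$ stars. Injectivity is clear since the decomposition of $u0^p$ into $\{0,10^p\}$-blocks is unique, so $u0^p$ — and therefore $u$ — is recovered from $\theta(u)$ by expanding each $\ast$ back to $10^p$; the appended suffix $0^p$ is then stripped. For surjectivity, given any word $v\in\{0,\ast\}^{n-wp+p}$ with $w$ stars, expand each $\ast$ to $10^p$ to get a binary string of length $n+p$; one checks it ends in at least $p$ zeros (its last block under the $\{0,10^p\}$ parse contributes a $0$-suffix of length $\geq p$, because expanding the final $\ast$ gives a trailing $0^p$ and otherwise the word ends in a literal $0$ from $v$ — in either case, after possibly chaining, the last $p$ symbols are $0$), so deleting the last $p$ symbols yields a length-$n$ string $u$ which lies in $\Fib^p_n$ and satisfies $\theta(u)=v$. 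Therefore the number of Fibonacci $p$-strings of length $n$ and weight $w$ equals $\binom{n-wp+p}{w}$, which is the first assertion.

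For the second assertion, the maximum weight is the largest $w$ for which $\binom{n-wp+p}{w}$ is a nonzero binomial coefficient, i.e.\ for which $n-wp+p\geq w$, equivalently $w(p+1)\leq n+p$, equivalently $w\leq \frac{n+p}{p+1}$. Since $w$ is an integer this gives $w_{\max}=\left\lfloor\frac{n+p}{p+1}\right\rfloor$, and this value is attained (the count is $\binom{w_{\max}}{w_{\max}}=1$ when $n-wp+p=w$, and $\binom{n-wp+p}{w}\geq 1$ whenever $n-wp+p\geq w$), completing the argument.

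I do not anticipate a genuine obstacle here; the only point that needs a little care — and hence the step I would write out most carefully — is the surjectivity of $\theta$, specifically the verification that expanding an arbitrary $\{0,\ast\}$-word and then deleting the final $p$ symbols always lands inside $\Fib^p_n$ rather than producing a string with a $1$ too close to the right end or a forbidden gap in the middle. Once the block structure $\{0,10^p\}$ is in place this is automatic, but it is worth stating explicitly that every $1$ in the expanded string is immediately followed by $p$ zeros by construction, so the defining separation condition survives truncation by $p$.
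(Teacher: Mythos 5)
Your argument is correct and is essentially the paper's own proof: the same padding of $u$ to $u0^p$, the same unique parse over $\{0,10^p\}$, the same bijection $\theta$ onto $\{0,\ast\}$-words of length $n-wp+p$ with $w$ stars, and the same deduction of the maximum weight from when that binomial is nonzero. You merely spell out the injectivity/surjectivity details that the paper leaves implicit.
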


The following result is well-known (see~\cite{HIK-2011} for example).

\begin{proposition}\label{pro:bt}
In every induced subgraph $H$ of $Q_n$ isomorphic to $Q_k$ there exists a unique vertex of minimal Hamming weight, \emph{the bottom vertex} $b(H)$. There exists also a unique vertex of maximal Hamming weight, the \emph{top vertex} $t(H)$. 
Furthermore $b(H)$ and $t(H)$ are at distance $k$ and characterize $H$ among the subgraphs of $Q_n$ isomorphic to $Q_k$. 
\end{proposition}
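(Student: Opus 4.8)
The plan is to reduce the statement to the structural fact that every induced subgraph $H$ of $Q_n$ with $H\cong Q_k$ is a \emph{standard subcube}: there is a set $S\subseteq[1,n]$ with $|S|=k$ and a map $c:[1,n]\setminus S\to B$ such that $V(H)=\{u\in B^n : u_i=c_i\text{ for all }i\notin S\}$. Granting this, the proposition is immediate. The Hamming weight of $u\in V(H)$ equals $|\{i\notin S: c_i=1\}|$ plus the number of $1$s among the coordinates $u_i$, $i\in S$; this is uniquely minimized by setting those $|S|$ free coordinates to $0$, giving the bottom vertex $b(H)$, and uniquely maximized by setting them to $1$, giving $t(H)$. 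These two vertices differ exactly on $S$, hence are at distance $|S|=k$; and from the ordered pair $(b(H),t(H))$ one reads off $S$ (the coordinates on which they differ) and $c$ (their common coordinates), so $V(H)$, and thus $H$, is determined.

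To prove the structural fact I would argue via edge directions. For $u\in V(H)$ let $S_u\subseteq[1,n]$ be the set of directions used by the $k$ edges of $H$ at $u$; since $H$ is an induced subgraph of $Q_n$ these edges use $k$ distinct directions, so $|S_u|=k$. The crux is that $S_u$ does not depend on $u$. Take an edge $uv$ of $H$ with $v=u+\delta_j$, and pick $i\in S_u\setminus\{j\}$, so $u+\delta_i\in V(H)$. Then $u+\delta_i$ and $v$ are at distance $2$ in $H$ (distance $2$ in $Q_n$ with the common neighbour $u$ in $H$), and because in $Q_k$ two vertices at distance $2$ have exactly two common neighbours, $u+\delta_i$ and $v$ have a common neighbour $w\ne u$ in $H$. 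Expressing $w$ simultaneously as a neighbour of $u+\delta_i$ and of $u+\delta_j$, and using $w\notin\{u,u+\delta_i,v\}$, the coordinate bookkeeping forces $w=u+\delta_i+\delta_j$, so the edge $vw$ uses direction $i$ and $i\in S_v$. Hence $S_u\setminus\{j\}\subseteq S_v$, and by symmetry $S_v\setminus\{j\}\subseteq S_u$; together with $j\in S_u\cap S_v$ this gives $S_u=S_v$. Connectivity of $H$ then makes $S_u$ equal to a fixed $k$-set $S$.

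It remains to handle the frozen coordinates: every edge of $H$ changes a coordinate in $S$, so along any walk in $H$ the coordinates outside $S$ stay fixed, and by connectivity all vertices of $H$ share a common value $c_i$ on each $i\notin S$. Thus $V(H)$ sits inside the standard subcube $\{u: u_i=c_i,\ i\notin S\}$, which has $2^k$ vertices; as $|V(H)|=2^k$ too, equality holds. The main obstacle is exactly the invariance of $S_u$: this is the single place where one must genuinely use $H\cong Q_k$ — through the "exactly two common neighbours at distance two" property — rather than merely $k$-regularity, and it requires the small but essential check that the second common neighbour $w$ is distinct from $u$, $u+\delta_i$ and $v$ so that $w=u+\delta_i+\delta_j$ is forced. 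Everything else is routine. (This result is classical; see~\cite{HIK-2011}.)
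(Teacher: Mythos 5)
Your proposal is correct, and in fact it supplies something the paper deliberately omits: the paper states this proposition as ``well-known'' and simply points to the literature (\cite{HIK-2011}) without giving any argument, so there is no internal proof to compare against. Your route is the standard self-contained one, and every step checks out: the degree count gives $|S_u|=k$ at each vertex; the invariance of $S_u$ along an edge is correctly reduced to the fact that two vertices at distance $2$ in $Q_k$ have exactly two common neighbours, combined with the observation that in $Q_n$ the only common neighbours of $u+\delta_i$ and $u+\delta_j$ are $u$ and $u+\delta_i+\delta_j$, which forces the second common neighbour in $H$ to be $u+\delta_i+\delta_j$ and hence $i\in S_v$; connectivity then freezes the coordinates outside $S$, and the cardinality count $|V(H)|=2^k$ upgrades the inclusion into the standard subcube to equality. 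From the subcube description the unique bottom and top vertices, their distance $k$, and the fact that the pair $(b(H),t(H))$ recovers $S$ and the frozen values (hence $H$) all follow immediately, exactly as you say. You also correctly identify the one genuinely non-trivial point, namely that $k$-regularity alone would not suffice and the distance-two common-neighbour property of $Q_k$ is where the isomorphism $H\cong Q_k$ is used. The only stylistic remark is that the edge cases $k=0,1$ are vacuous in your crux step and could be mentioned in passing, but they cause no gap.
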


If $H$ is an induced subgraph of $\Gamma_n$, it is also an induced subgraph of $Q_n$. Thus Proposition~\ref{pro:bt} is still true for induced subgraphs of Fibonacci cubes.
The {\em support} of an induced hypercube $H$, $\supp(H)$, is the set of $k$ coordinates that vary in $H$. Therefore, 
$$\supp(H) = \{i\in [1,n]:\ t_i=1, b_i=0\}\,.$$ 
$H$ is thus characterized by the couple $(t,b)$ consisting of the top vertex and the bottom vertex of $H$.

For a graph $G$, let $c_k(G)$ $(k\ge 0)$ be the number of induced 
subgraphs of $G$ isomorphic to  $Q_k$. The {\em cube polynomial}, $C_G(x)$,
of $G$, is the corresponding enumerator polynomial, that is

\begin{equation}\label{eqn:defCG}
C_G(x) = \sum_{k\geq 0} c_k(G) x^k\,.
\end{equation}
This polynomial was introduced in~\cite{BKS-2003}, determined for Fibonacci and Lucas cubes in~\cite{KM-2012a} and afterwards for several of their variations~(see \cite[Chapter~9]{EKM-2023} for example).

Assume $0^n$ belongs to $G$ subgraph of $Q_n$. A bivariate refinement of $C_{G}(x)$ is the {\em distance cube polynomial} (with respect to $0^n$)~\cite{KM-2019a}, first introduced as {\em$q$-cube polynomial}{~\cite{SE-2017a,SE-2018a} in the case of graphs where it can be seen as a $q$-analogue of the cube polynomial. This polynomial keeps track of the distance of the hypercubes to $0^n$.  We define the polynomial the following way

\begin{equation}\label{eqn:defCGq}
D_G(x,q) = \sum_{k\geq 0} c_{k,d}(G) x^kq^d 
\end{equation} where $c_{k,d}(G)$ is  the number of induced 
subgraphs of $G$ isomorphic to $Q_k$ with bottom vertex at distance $d$ of $0^n$.
%Let $u\in {\cal F}^p_n$
%with $w_H(u)= w$. 
%In the string $u0^P$ every $1$ is followed by a $0^p$. 
%Replace every substring $10^p$ in $u0$ by the character $\star$. 
%In this way we obtain a string $\theta(u)$ 
%over the alphabet $\{0,\star\}$ of length $n+1-w$ which uses exactly $w$ characters $\star$. 
%It is immediate that $\theta$ is a bijection. Thus we have
%
%\begin{proposition}\label{prop:Hammingweight}
%The number of strings in ${\cal F}^p_n$ of Hamming weight $w$ is 
%$\binom{n-w+1}{w}$\index{subject}{number of vertices by weight!in $\Gamma_n$}. 
%\end{proposition}
%%%%%%%%%%%%%%%%%%%%%%%%%%%%%%%%%%%%%%%%%%%%%%%%%%%%%%%%%%
%%%%%%%%%%%%%%%%%%%%%%%%%%%%%%%%%%%%%%%%%%%%%%%%%%%%%%%%%%
\section{Cube polynomial of Fibonacci $p$-cubes}
\label{sec:cubepolynomial}
%%%%%%%%%%%%%%%%%%%%%%%%%%%%%%%%%%%%%%%%%%%%%%%%%%%%%%%%%%
%%%%%%%%%%%%%%%%%%%%%%%%%%%%%%%%%%%%%%%%%%%%%%%%%%%%%%%%%%

In this section we prove in two different ways the following result conjectured in \cite{WY-2022} as Conjecture 6.3.

\begin{theorem} \label{thm:cubepolyfib}

If $n\ge 0$, then $C_{\Gamma_n^p}(x)$ is of degree $\left\lfloor \frac{n+p}{p+1}\right\rfloor$. Moreover,  
$$C_{\Gamma_n^p}(x)=\sum_{a = 0}^{\left\lfloor \frac{n+p}{p+1}\right\rfloor}\binom{n-ap+p}{a}(1+x)^{a}\,,$$
and the number of induced subgraphs of $\Gamma_n^p$ isomorphic to  $Q_k$ is 
$$c_k(\Gamma_n^p)=\sum_{i = k}^{\left\lfloor \frac{n+p}{p+1}\right\rfloor}\binom{n-ip+p}{i}\binom{i}{k}\,.$$
\end{theorem}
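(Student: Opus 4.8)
The plan is to count induced $Q_k$ subgraphs of $\Gamma_n^p$ directly via Proposition~\ref{pro:bt}: each such subgraph is determined by its top vertex $t$ and bottom vertex $b$, where $b\le t$ coordinatewise, $t\in\Fib^p_n$ (the top vertex has the most $1$s so it must itself be a Fibonacci $p$-string — indeed every vertex of the subgraph is, and in particular $t$), and the support $\supp(H)=\{i: t_i=1, b_i=0\}$ has size $k$. The key observation is the reverse: given $t\in\Fib^p_n$ with $w(t)=a$, \emph{any} choice of $k$ of its $a$ ones to ``turn off'' yields a valid bottom vertex $b$, and the whole cube $H$ between $b$ and $t$ lies in $\Gamma_n^p$. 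This last point needs a short argument: if $u$ is obtained from $t$ by flipping some subset of the $1$-positions of $t$ to $0$, then $u$ is still a Fibonacci $p$-string, because deleting $1$s from a string in which consecutive $1$s are separated by at least $p$ zeros cannot create a violation — the gaps only get longer. Hence the interval $[b,t]$ in the hypercube order is entirely contained in $\Fib^p_n$, so $H$ is genuinely an induced subcube of $\Gamma_n^p$.

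With that established, the count is immediate: $c_k(\Gamma_n^p)=\sum_{t\in\Fib^p_n}\binom{w(t)}{k}$, and grouping the sum according to the Hamming weight $w(t)=i$ gives $c_k(\Gamma_n^p)=\sum_i \#\{t\in\Fib^p_n: w(t)=i\}\binom{i}{k}$. By Proposition~\ref{prop:Hammingweight} the number of Fibonacci $p$-strings of length $n$ and weight $i$ is $\binom{n-ip+p}{i}$, and $i$ ranges from $0$ (or $k$, since $\binom{i}{k}=0$ for $i<k$) up to the maximum weight $\left\lfloor\frac{n+p}{p+1}\right\rfloor$. This yields exactly
$$c_k(\Gamma_n^p)=\sum_{i=k}^{\left\lfloor\frac{n+p}{p+1}\right\rfloor}\binom{n-ip+p}{i}\binom{i}{k}\,.$$
For the cube polynomial, I would substitute this into $C_{\Gamma_n^p}(x)=\sum_k c_k(\Gamma_n^p)x^k$, swap the order of summation over $k$ and $i$, and use the binomial identity $\sum_{k\ge 0}\binom{i}{k}x^k=(1+x)^i$ to collapse the inner sum, producing $C_{\Gamma_n^p}(x)=\sum_{i=0}^{\left\lfloor\frac{n+p}{p+1}\right\rfloor}\binom{n-ip+p}{i}(1+x)^i$, which is the claimed formula after renaming $i$ to $a$.

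Finally, the degree statement: the top-degree term comes only from the summand with the largest $a$, namely $a_{\max}=\left\lfloor\frac{n+p}{p+1}\right\rfloor$, whose coefficient $\binom{n-a_{\max}p+p}{a_{\max}}$ is a binomial coefficient with upper index $n-a_{\max}p+p = n+p-a_{\max}p \ge a_{\max}$ (this inequality is exactly the statement that weight $a_{\max}$ is achievable, equivalently $a_{\max}\le\frac{n+p}{p+1}$), hence strictly positive; so the degree is precisely $\left\lfloor\frac{n+p}{p+1}\right\rfloor$, matching the maximum Hamming weight from Proposition~\ref{prop:Hammingweight}, as it should since the largest induced subcube is the one whose top vertex has maximal weight and whose bottom vertex is $0^n$. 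I expect the only real subtlety to be the verification that turning $1$s off in a Fibonacci $p$-string keeps it a Fibonacci $p$-string and that the hypercube interval stays inside $\Fib^p_n$ — everything else is summation-swapping and the binomial theorem.
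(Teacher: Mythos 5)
Your proof is correct and follows essentially the same route as the paper: count induced $Q_k$'s by their top vertex (a Fibonacci $p$-string of weight $i$) together with a choice of $k$ of its $1$-coordinates, then swap the order of summation and apply the binomial theorem. You merely make explicit two details the paper leaves implicit, namely that flipping $1$s to $0$s keeps a string in $\Fib^p_n$ (so the whole subcube lies in $\Gamma_n^p$) and that the leading coefficient $\binom{n-a_{\max}p+p}{a_{\max}}$ is positive, which settles the degree claim.
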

\begin{proof}
The expression for $c_k(\Gamma_n^p)$  can   be deduced by combinatorial arguments as follows. Let $H$ be an induced subgraph of $\Gamma_n^p$ isomorphic to  $Q_k$. The weight of the top vertex of $H$  is at least $k$ and at most $\left\lfloor \frac{n+p}{p+1}\right\rfloor$ by Proposition~\ref{prop:Hammingweight}. Let $i$ such that $k\leq i\leq\left\lfloor \frac{n+p}{p+1}\right\rfloor$. There exist $\binom{n-ip+p}{i}$ Fibonacci $p$-strings $u$ of Hamming weight $i$. 
Choose a set $K$ of $k$ coordinates among the $i$ such that $u_j=1$. A $Q_k$ with top vertex $u$ is induced by the $2^k$ vertices obtained by varying these $k$ coordinates. Therefore $u$ give rise to $\binom{i}{k}$ 
different induced $k$-cubes and the expression for $c_k(\Gamma_n^p)$ follows.
From the  expression of $c_k(\Gamma_n^p)$ exchanging the order of summation in $\sum_{k\geq 0} c_k(\Gamma_n^p) x^k$ it is immediate to deduce $C_{\Gamma_n^p}(x)$.
\end{proof}
\qed

The generating function of $C_{\Gamma_n^p}(x)$ was already determined  by the authors of~\cite{WY-2022}.

\begin{theorem} \label{thm:cubepolyfibgene}\cite[Theorem 6.2]{WY-2022}
The generating function of $C_{\Gamma_n^p}(x)$ is
%$$
%\sum_{n\geq 0}C_{\Gamma_n^p}(x)t^n=\frac{1+(1+x)t+\ldots+(1+x)t^p}{1-t-(1+x)t^{p+1}}\,.$$
%$$\sum_{n\geq 0}C_{\Gamma_n^p}(x)t^n=\frac{1+(1+x)(t+\ldots+t^p}{1-t-(1+x)t^{p+1}}\,.$$
\begin{equation*}
\sum_{n\geq 0}C_{\Gamma_n^p}(x)t^n=\frac{1+(1+x)t+\ldots+(1+x)t^i+\ldots+(1+x)t^p}{1-t-(1+x)t^{p+1}}\,.
\end{equation*}
%$$\sum_{n\geq 0}C_{\Gamma_n^p}(x)t^n=\frac{1+(1+x)(t+\ldots+t^i+\ldots+t^p}{1-t-(1+x)t^{p+1}}\,.$$
%$$\sum_{n\geq 0}C_{\Gamma_n^p}(x)t^n=\frac{1+\sum_{i=1}^p({(1+x)t^i}}{1-t-(1+x)t^{p+1}}\,.
%
%
%
%$$
\end{theorem}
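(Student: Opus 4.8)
The plan is to derive the generating function directly from the closed form for $C_{\Gamma_n^p}(x)$ established in Theorem~\ref{thm:cubepolyfib}, namely $C_{\Gamma_n^p}(x)=\sum_{a\geq 0}\binom{n-ap+p}{a}(1+x)^a$, where the sum is effectively finite since $\binom{n-ap+p}{a}=0$ once $a$ exceeds $\lfloor\frac{n+p}{p+1}\rfloor$ (as $n-ap+p<a$ then). So I would write
\begin{equation*}
\sum_{n\geq 0}C_{\Gamma_n^p}(x)t^n=\sum_{n\geq 0}\sum_{a\geq 0}\binom{n-ap+p}{a}(1+x)^a t^n,
\end{equation*}
interchange the order of summation (justified since for fixed $a$ only finitely many $n$ contribute a nonzero $\binom{n-ap+p}{a}$ to each coefficient of $t^n$, so everything is a formal power series identity), and for each fixed $a$ evaluate $\sum_{n\geq 0}\binom{n-ap+p}{a}t^n$.

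The key step is this inner sum. Substituting $m=n-ap$ and noting the binomial vanishes for $m+p<a$, i.e.\ for $n<ap+a-p$, we get $\sum_{n}\binom{n-ap+p}{a}t^n=t^{ap}\sum_{m\geq 0}\binom{m+p}{a}t^m$. Now I would use the standard identity $\sum_{m\geq 0}\binom{m+p}{a}t^m=\frac{1}{(1-t)^{a+1}}\bigl(\text{polynomial part}\bigr)$; more precisely, since $\binom{m+p}{a}$ for $m=0,\dots$ starts at $\binom{p}{a}$, it is cleaner to write $\sum_{m\geq 0}\binom{m+p}{a}t^m=\sum_{j\geq 0}\binom{j}{a}t^{j-p}$ restricted to $j\geq p$, and combine with the tail $\sum_{j\geq a}\binom{j}{a}t^j=\frac{t^a}{(1-t)^{a+1}}$. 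This bookkeeping with the finitely many missing initial terms (for $a\le p$ there are no missing terms, for $a>p$ the terms $j=p+1,\dots$ are all that survive anyway) is the part requiring care; handling it by multiplying through by $(1-t-(1+x)t^{p+1})$ and checking the numerator collapses to $1+(1+x)t+\dots+(1+x)t^p$ is likely the most efficient route.

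Concretely, then, I would define $S(t)=\sum_{n\geq 0}C_{\Gamma_n^p}(x)t^n$, multiply by $1-t-(1+x)t^{p+1}$, and show using the recursion $\binom{n-ap+p}{a}=\binom{n-1-ap+p}{a}+\binom{n-1-ap+p}{a-1}$ (Pascal) together with the shift $n\mapsto n-p-1$ that the coefficient of $t^n$ in $(1-t-(1+x)t^{p+1})S(t)$ telescopes: the $(1+x)$ factor absorbs the $\binom{\cdot}{a-1}\mapsto\binom{\cdot}{a}$ index shift, leaving only boundary contributions from small $n$. Those boundary contributions are exactly $C_{\Gamma_n^p}(x)$ for $n\le p$ minus the overlap, and from $|{\cal F}^p_n|=n+1$ for $n\le p$ one has $C_{\Gamma_n^p}(x)=1+n x$\,--- wait, more carefully $C_{\Gamma_n^p}(1)=$ number of vertices $=n+1$ and $c_0=n+1$, $c_1=n$ for $n\le p$ (a $p$-string of length $\le p$ has at most one $1$), so $C_{\Gamma_n^p}(x)=(n+1)+$ lower\,--- I would just verify directly that for $0\le n\le p$ the formula gives $C_{\Gamma_n^p}(x)=\binom{n+p}{0}+\binom{n}{1}(1+x)=1+n(1+x)$, matching, and that these are precisely the terms surviving in the numerator. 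I expect the main obstacle to be purely organizational: keeping the index ranges of the various binomial sums aligned so that the telescoping is clean and the numerator truncates at degree $p$ in $t$ rather than running longer; there is no conceptual difficulty once Theorem~\ref{thm:cubepolyfib} is in hand.
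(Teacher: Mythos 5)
Your derivation is correct, but it is a genuinely different route from the paper's. The paper does not deduce the generating function from the closed form at all: the statement is quoted from Wei--Yang, and the paper's own independent proof comes later, via the daisy-cube identity $C_G(x)=W_G(x+1)$ of Klav\v{z}ar--Mollard combined with the generating function of the weight enumerator $W_{\Gamma_n^p}$, which is obtained from the free-monoid factorization of extended Fibonacci $p$-strings over the alphabet $\{0,10^p\}$ (Proposition~\ref{pro:mono} and Theorem~\ref{thm:weightfibgene}). In fact, Section~\ref{sec:cubepolynomial} of the paper runs your algebra in the opposite direction, extracting the closed form of Theorem~\ref{thm:cubepolyfib} from the generating function by expanding $1/(1-t-yt^{p+1})$. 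Your direction is legitimate and non-circular, since Theorem~\ref{thm:cubepolyfib} has a purely combinatorial proof (counting induced cubes by their top vertex), and your preferred route is sound: multiplying $S(t)=\sum_n C_{\Gamma_n^p}(x)t^n$ by $1-t-(1+x)t^{p+1}$, Pascal's identity gives exactly the recursion $C_{\Gamma_n^p}(x)=C_{\Gamma_{n-1}^p}(x)+(1+x)C_{\Gamma_{n-p-1}^p}(x)$ for $n\ge p+1$, and the boundary values $C_{\Gamma_n^p}(x)=1+n(1+x)$ for $0\le n\le p$ produce precisely the numerator $1+(1+x)t+\cdots+(1+x)t^p$. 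What your approach buys is elementarity (nothing beyond Theorem~\ref{thm:cubepolyfib} and Pascal); what the paper's approach buys is generality, since the same computation simultaneously yields $W_{\Gamma_n^p}$, the distance cube polynomial $D_{\Gamma_n^p}(x,q)$, and exploits the daisy-cube structure.

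One small slip in the route you set aside: the identity $\sum_{n\ge 0}\binom{n-ap+p}{a}t^n=t^{ap}\sum_{m\ge 0}\binom{m+p}{a}t^m$ is false for $1\le a<p$, because it drops the nonzero terms with $a(p+1)-p\le n<ap$; the correct evaluation is $t^{a(p+1)-p}/(1-t)^{a+1}$ for $a\ge 1$ (and $1/(1-t)$ for $a=0$), which does sum to the stated rational function. Since you ultimately rely on the multiply-by-the-denominator telescoping argument rather than this inner-sum evaluation, the slip does not affect your proof.
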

It is also possible to derive  Theorem~\ref{thm:cubepolyfib} from this generating function. 

Indeed let $$A(y,t)=\frac{1+yt+\ldots+yt^i\ldots+yt^p}{1-t-yt^{p+1}}\,.$$
Expanding $\frac{1}{1-t-yt^{p+1}}$ we deduce that $A(y,t)$ is a bivariate polynomial.   An easy computation give the following equality
\begin{equation}\label{eq:A(y,t)}
t^p A(y,t)=  \frac{1}{1-t-yt^{p+1}}-1-t-\ldots-t^{i}\ldots-t^{p-1}\,.
 \end{equation}
For any integers $a$ and $n$ the coefficient of $y^at^n$  in $ A(y,t)$ is that of $y^at^{n+p}$  in the right side of (\ref{eq:A(y,t)}) thus in $\frac{1}{1-t-yt^{p+1}}$ since $n+p>p-1$.

Note that
\begin{equation*}\frac{1}{1-t-yt^{p+1}}=\sum_{m\geq0 }{(t+yt^{p+1})^m}
\end{equation*}
and $(t+yt^{p+1})^m$ contributes to $y^at^{n+p}$ if and only if  $a(p+1)+(m-a)=n+p$. Furthermore this contribution is $\binom{m}{a}=\binom{n-ap+p}{a}$. Therefore 
\begin{equation}\label{eq:A(y,t)sum}
A(y,t)=\sum_{n\geq0 }{\sum_{a\geq0}}\binom{n-ap+p}{a}{y^at^n}
\end{equation}
with the usual convention that $\binom{b}{a}=0$ for $a>b\,.$

Since $A((1+x),t)$ is the generating function of  $C_{\Gamma_n^p}(x)$ by identification we obtain Theorem~\ref{thm:cubepolyfib}.

We will see in Section~\ref{sec:distancecube} a natural interpretation of $A(y,t)$ and  equality (\ref{eq:A(y,t)}).
%%%%%%%%%%%%%%%%%%%%%%%%%%%%%%%%%%%%%%%%%%%%%%%%%%%%%%%%%%
%%%%%%%%%%%%%%%%%%%%%%%%%%%%%%%%%%%%%%%%%%%%%%%%%%%%%%%%%%
\section{Hypercubes in $\Gamma_n^p$ and convolution of $(F_n^p)_{n\geq0}$}\label{sec:convol}
%%%%%%%%%%%%%%%%%%%%%%%%%%%%%%%%%%%%%%%%%%%%%%%%%%%%%%%%%%
%%%%%%%%%%%%%%%%%%%%%%%%%%%%%%%%%%%%%%%%%%%%%%%%%%%%%%%%%%
We next study, for a fixed $k$, the sequence $ (c_k(\Gamma_n^p))_{n\geq0}$ of the number of induced subgraphs of $\Gamma_n^p$ isomorphic to  $Q_k$ .

The size of $\Gamma_n^p$ was determined in the seminal paper~\cite[Theorem~4.6]{WY-2022}. An  alternative expression can easily be proved by induction using the relation~\cite[Proposition~3.2]{WY-2022} 
\begin{equation*}
\left|E(\Gamma_n^p)\right|=\left|E(\Gamma_{n-1}^p)\right|+\left|E(\Gamma_{n-p-1}^p)\right|+\left|{\cal F}^p_{n-p-1}\right|\ \ \ \ \ \ \ (n\geq p+1)\,.
\end{equation*}

\begin{proposition}
The size of $\Gamma_n^p,n\geq 0$, is
\begin{equation*}
\left|E(\Gamma_n^p)\right|=\sum_{i=1}^n{F_i^p}F_{n-i+1}^p\,.
\end{equation*}
\end{proposition}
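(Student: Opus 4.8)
The statement to prove is the closed form
$$\left|E(\Gamma_n^p)\right|=\sum_{i=1}^n F_i^p\, F_{n-i+1}^p\,,$$
and the excerpt explicitly suggests proving it by induction using the recurrence
$\left|E(\Gamma_n^p)\right|=\left|E(\Gamma_{n-1}^p)\right|+\left|E(\Gamma_{n-p-1}^p)\right|+\left|{\cal F}^p_{n-p-1}\right|$ valid for $n\geq p+1$, together with the identity $\left|{\cal F}^p_m\right|=F^p_{m+p+1}$. So the plan is: first denote by $e_n$ the proposed right-hand side $\sum_{i=1}^n F_i^p F_{n-i+1}^p$ (a convolution of the Fibonacci $p$-sequence with itself), and verify directly that $e_n$ satisfies the same recurrence and the same initial conditions as $\left|E(\Gamma_n^p)\right|$.

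For the base cases, I would check $n\in[0,p]$: here $\Gamma_n^p$ is the graph on strings of length $n$ with at most one $1$, hence $\left|E(\Gamma_n^p)\right|=n$ (each edge flips the unique $1$ against $0^n$). On the other side, for $n\le p$ the convolution $e_n=\sum_{i=1}^n F_i^p F_{n-i+1}^p$ has all indices $i,n-i+1$ lying in $[1,p]$ when $n\le p$ wait — more carefully, indices run in $[1,n]\subseteq[1,p]$, where $F_i^p=1$, so $e_n=\sum_{i=1}^n 1\cdot F_{n-i+1}^p=\sum_{j=1}^n F_j^p=n$ since each $F_j^p=1$ for $j\in[1,p]$. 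Thus the base cases match.

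For the inductive step (for $n\ge p+1$, assuming the formula holds for all smaller indices), I would compute $e_n-e_{n-1}-e_{n-p-1}$ and show it equals $F^p_{n}$, which equals $\left|{\cal F}^p_{n-p-1}\right|$ by the order formula $|{\cal F}^p_m|=F^p_{m+p+1}$. The natural tool here is generating functions: with $g(x)=\sum_{n\ge0}F_n^p x^n=\frac{x}{1-x-x^{p+1}}$, the convolution $\sum_{n\ge0}e_n x^n = g(x)^2 = \frac{x^2}{(1-x-x^{p+1})^2}$, and one checks that $\left(1-x-x^{p+1}\right)g(x)^2 = x\cdot g(x)$, i.e. $\sum_n (e_n - e_{n-1} - e_{n-p-1})x^n = \sum_n F_n^p x^n$, which is exactly the desired recurrence once the low-order terms are reconciled with the base cases. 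Alternatively one can do the same identity purely combinatorially by splitting the convolution sum according to whether $i=1$, $i=n$, or $2\le i\le n-1$ and using $F^p_i=F^p_{i-1}+F^p_{i-p-1}$.

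The main obstacle is bookkeeping at the boundary: the recurrence for $\left|E(\Gamma_n^p)\right|$ only holds for $n\ge p+1$, and the Fibonacci $p$-recursion $F^p_n=F^p_{n-1}+F^p_{n-p-1}$ only holds for $n\ge p+1$ as well, so one must be careful that the generating-function manipulation (which encodes all $n$ at once) correctly reproduces both the recurrence in the stable range and the explicit small values $e_n=n$ for $n\le p$; in particular the numerator terms $1+(1+x)t+\cdots$ analogue here, namely the polynomial correction coming from $g(x)^2$ versus $\frac{x}{1-x-x^{p+1}}$, must be tracked. Once that reconciliation is done, the induction closes immediately since both sequences satisfy the same recurrence with the same $p+1$ initial values.
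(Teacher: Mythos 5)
Your approach---strong induction on the edge recurrence, base cases $n\in[0,p]$ where $|E(\Gamma_n^p)|=n=e_n$, and the convolution identity $e_n-e_{n-1}-e_{n-p-1}=F^p_n=|{\cal F}^p_{n-p-1}|$---is essentially the proof the paper intends ("can easily be proved by induction using the relation"), so it matches; the paper only additionally re-derives the formula later as the $k=1$ case of a bijective count. One indexing slip to correct: since the indices in $e_n=\sum_{i=1}^n F_i^p F_{n-i+1}^p$ sum to $n+1$, the generating function is $\sum_{n\ge0}e_nx^n=g(x)^2/x=\frac{x}{(1-x-x^{p+1})^2}$ rather than $g(x)^2$; with this fix, $(1-x-x^{p+1})\sum_n e_nx^n=g(x)$ yields exactly $e_n-e_{n-1}-e_{n-p-1}=F^p_n$ for $n\ge p+1$ as you claim, whereas the identity as written would give $F^p_{n-1}$ on the right-hand side.
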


This result is the particular case $k=1$ of the following theorem which have a combinatorial interpretation.
\begin{theorem}
\label{th:product-of-p-Fibo}
For $n,k\ge 0$ the number of induced subgraphs of $\Gamma_n^p$ isomorphic to  $Q_k$ is
$$
c_k(\Gamma_n^p)=\sum_{ {i_0,i_1,\ldots, i_{k} \geq 0 } \atop
i_0+i_1+\dots+i_{k}=n-kp+p+1}\!\!\!\!\!\!\!\!\!\!\!\!\!\!\!\!
F_{i_0}^pF_{i_1}^p\cdots F_{i_{k}}^p\,.
%c_k(\Gamma_n^p)=\sum_{\substack { {i_0,i_1,\ldots, i_{k} \geq 0 } \\
%{i_0+i_1+\dots+i_{k}=n-kp+p+1}}}
%F_{i_0}^pF_{i_1}^p\cdots F_{i_{k}}^p\,.
$$
\end{theorem}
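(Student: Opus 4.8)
The plan is to combine the closed form for $c_k(\Gamma_n^p)$ from Theorem~\ref{thm:cubepolyfib} with a generating-function identity expressing the $(k{+}1)$-fold convolution of $(F_n^p)_{n\ge 0}$. Set $g(t)=\sum_{n\ge 0}F_n^p t^n=\dfrac{t}{1-t-t^{p+1}}$, which was recorded in Section~\ref{sec:basic}. Then the right-hand side of the claimed identity is, up to an index shift, the coefficient of $t^{\,n-kp+p+1}$ in $g(t)^{k+1}$; equivalently, since each factor contributes at least one power of $t$, it is the coefficient of $t^{\,n+p+1}$ in $\left(\dfrac{1}{1-t-t^{p+1}}\right)^{k+1}$. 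So I would first rewrite the statement as
\begin{equation*}
\sum_{n\ge 0} c_k(\Gamma_n^p)\, t^n \;=\; t^{\,kp-p-1}\,g(t)^{k+1}\,,
\end{equation*}
after checking that the lowest-degree term matches (the smallest $n$ with a nonzero right side is $n=kp+p$, the top vertex $1(0^p1)^{k-1}0^{p}$, and indeed $c_k(\Gamma_{kp+p}^p)=1$).

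The core computation is then to show that the generating function $\sum_n c_k(\Gamma_n^p)t^n$ has this product form. I would do this directly from $c_k(\Gamma_n^p)=\sum_{i\ge k}\binom{n-ip+p}{i}\binom{i}{k}$: multiplying by $t^n$ and summing over $n$,
\begin{equation*}
\sum_{n\ge 0} c_k(\Gamma_n^p)\, t^n \;=\; \sum_{i\ge k}\binom{i}{k}\sum_{n\ge 0}\binom{n-ip+p}{i}t^n\,,
\end{equation*}
and the inner sum is $t^{ip-p}\sum_{m\ge i}\binom{m}{i}t^m = t^{ip-p}\cdot \dfrac{t^{i}}{(1-t)^{i+1}}= \dfrac{1}{1-t}\left(\dfrac{t^{p+1}}{1-t}\right)^{i}$. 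Summing the geometric-like series $\sum_{i\ge k}\binom{i}{k}z^i=\dfrac{z^k}{(1-z)^{k+1}}$ with $z=\dfrac{t^{p+1}}{1-t}$ collapses everything to
\begin{equation*}
\frac{1}{1-t}\cdot\frac{z^k}{(1-z)^{k+1}}
=\frac{1}{1-t}\cdot\frac{(t^{p+1}/(1-t))^k}{(1-t^{p+1}/(1-t))^{k+1}}
=\frac{t^{k(p+1)}}{(1-t-t^{p+1})^{k+1}}\,,
\end{equation*}
which is exactly $t^{kp-p-1}g(t)^{k+1}$. Extracting the coefficient of $t^n$ gives the stated sum over compositions $i_0+\dots+i_k=n-kp+p+1$.

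Alternatively — and this is the "combinatorial interpretation" the theorem alludes to — one can prove the identity bijectively: an induced $Q_k$ in $\Gamma_n^p$ is determined by its top vertex $t\in\Fib^p_n$ together with a choice of $k$ of the $1$'s of $t$ to designate as "free" coordinates. Reading $t$ left to right and cutting the string $t0^p$ just after each designated block $10^p$ produces an ordered $(k{+}1)$-tuple of Fibonacci $p$-strings (the undesignated $1$'s stay inside the pieces), whose lengths sum to $n+p-kp$; shifting to account for the appended $0^p$ and the encoding $\theta$ of Proposition~\ref{prop:Hammingweight} yields lengths summing to $n-kp+p+1$ and each piece counted by $F^p_{i_j}$. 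I would present the generating-function derivation as the main argument since it is short and self-contained, and remark that the case $k=1$ recovers the size formula; the only real point requiring care is the index bookkeeping (the $+1$ in $n-kp+p+1$ and the boundary cases $n<kp$, where both sides vanish), which I expect to be the main obstacle to a clean write-up rather than any conceptual difficulty.
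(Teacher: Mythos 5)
Your overall route --- deducing the theorem from the closed form of Theorem~\ref{thm:cubepolyfib} by establishing the generating-function identity $\sum_{n\ge0}c_k(\Gamma_n^p)t^n=t^{kp-p-1}g(t)^{k+1}$ with $g(t)=t/(1-t-t^{p+1})$ --- is genuinely different from the paper's argument. The paper proceeds bijectively: it cuts the string $u0^p$ (top vertex $u$ together with the chosen support) into $k+1$ pieces lying in ${\cal F}^{p\,\,\bullet}$, counts the pieces of length $i$ by $F_{i+1}^p$, and shifts indices; this is essentially the sketch you relegate to your final paragraph. Your reformulation is exactly Corollary~\ref{co:gen-funct-ck}, which the paper derives \emph{from} the theorem; reversing the logic is legitimate, since Theorem~\ref{thm:cubepolyfib} is proved independently, and it buys a short self-contained derivation at the cost of losing the combinatorial interpretation the paper is after.

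However, the central computation as written is off by a factor $t^{p}$. From the inner sum $t^{ip-p}\sum_{m\ge i}\binom{m}{i}t^m=t^{ip-p}\,\frac{t^{i}}{(1-t)^{i+1}}$ you pass to $\frac{1}{1-t}\left(\frac{t^{p+1}}{1-t}\right)^{i}$, but the latter equals $\frac{t^{i(p+1)}}{(1-t)^{i+1}}$, which is $t^{p}$ times the former. Consequently your collapsed expression $\frac{t^{k(p+1)}}{(1-t-t^{p+1})^{k+1}}$ is \emph{not} equal to $t^{kp-p-1}g(t)^{k+1}=\frac{t^{k(p+1)-p}}{(1-t-t^{p+1})^{k+1}}$, and the final ``which is exactly $t^{kp-p-1}g(t)^{k+1}$'' step is false as displayed. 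The same bookkeeping trouble shows up in the side remarks: the coefficient you need from $(1-t-t^{p+1})^{-(k+1)}$ is that of $t^{\,n+p-k(p+1)}$, not $t^{\,n+p+1}$; the smallest $n$ with $c_k(\Gamma_n^p)\neq0$ is $k(p+1)-p$, with top vertex $1(0^p1)^{k-1}$, not $kp+p$; and for $k=0$ the term $i=0$ needs separate treatment, since your substitution there lets $n$ run negative. All of this is repairable --- carrying the stray $t^{-p}$ through the computation yields precisely $\frac{t^{k(p+1)-p}}{(1-t-t^{p+1})^{k+1}}$, i.e.\ Corollary~\ref{co:gen-funct-ck}, from which the stated convolution formula follows by coefficient extraction --- but as written the key chain of equalities does not hold, so the index bookkeeping you yourself flagged as the main obstacle is exactly where the proof currently breaks.
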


To prove this theorem we need first to introduce a notation and two propositions.

\begin{definition}
Let ${\cal F}_n^{p\,\,\bullet}$ be the set of Fibonacci $p$-strings defined by 
 \begin{flalign*}
{\cal F}_n^{p\,\,\bullet}& =\{0^n\} \text{ for }  n\in[0,p-1] \text{ and}\\
{\cal F}_n^{p\,\,\bullet}&=\{v0^p\st v\in {\cal F}_{n-p}^{p}\} \text{ for }n\geq p.
\end{flalign*}
\end{definition}
In other words ${\cal F}^{p\,\,\bullet}=\bigcup_{n\geq0}{\cal F}_n^{p\,\,\bullet}$ is the set of Fibonacci $p$-strings that do not end $10^r$ with $r<p$.

%\end {definition}
The following property is immediate.
\begin{proposition}
Let $u$ be a binary string,  then $10^pu1$ is a Fibonacci $p$-string if and only if $u\in {\cal F}^{p\,\,\bullet}$.
\end{proposition}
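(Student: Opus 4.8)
The plan is to unwind the definition of a Fibonacci $p$-string directly, since the only thing that matters is which pairs of $1$'s occur consecutively in $s:=10^pu1$ and how many $0$'s separate each such pair. First I would list the $1$'s of $s$: the leading $1$, then the $1$'s contributed by $u$ (if any), then the trailing $1$. Accordingly there are up to three kinds of ``gaps'' between consecutive $1$'s of $s$: the gap right after the leading $1$, the gaps lying entirely inside $u$ between two consecutive $1$'s of $u$, and the gap right before the trailing $1$.

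Next I would check each kind of gap against the defining condition ``at least $p$ zeros''. The gap after the leading $1$ always contains the block $0^p$ (possibly extended by leading $0$'s of $u$), hence has length $\ge p$ and imposes nothing. A gap lying inside $u$ has the same length in $s$ as in $u$, so all such gaps are admissible exactly when $u$ is itself a Fibonacci $p$-string (this is vacuous when $u$ has at most one $1$). Finally, if $u$ contains a $1$ write $u=v10^r$ with $r\ge 0$; then the gap preceding the trailing $1$ of $s$ has length exactly $r$, so it is admissible if and only if $r\ge p$, i.e. $u$ does not end with $10^r$ for any $r<p$. If $u$ contains no $1$, this last gap does not exist.

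Combining the three cases, $10^pu1$ is a Fibonacci $p$-string if and only if $u$ is a Fibonacci $p$-string that does not end with $10^r$ for any $r<p$, which by the description of ${\cal F}^{p\,\,\bullet}$ given just after its definition is exactly $u\in {\cal F}^{p\,\,\bullet}$. I do not expect any real obstacle; the only point needing a word of care is the degenerate case in which $u$ has no $1$ (so there is no ``last $1$ of $u$''): then $s$ is just two $1$'s separated by $p$ or more $0$'s and is always a Fibonacci $p$-string, while $u=0^{|u|}\in{\cal F}^{p\,\,\bullet}$ as well (in particular $u=\lambda$ is covered, matching ${\cal F}_0^{p\,\,\bullet}=\{0^0\}$), so the equivalence still holds.
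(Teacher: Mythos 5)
Your proof is correct: the paper states this proposition without proof (``the following property is immediate''), and your gap-by-gap verification of the three kinds of separations between consecutive $1$s in $10^pu1$ is exactly the direct check the paper has in mind. Your only auxiliary ingredient is the alternative description of ${\cal F}^{p\,\,\bullet}$ as the Fibonacci $p$-strings not ending in $10^r$ with $r<p$, which the paper itself asserts right after the definition, so relying on it is legitimate; your handling of the degenerate case $u=0^{|u|}$ (including $u=\lambda$) is also correct.
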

\begin{proposition}\label{pr:car}

$|{\cal F}_n^{p\,\,\bullet}|=F_{n+1}^p$.
\end{proposition}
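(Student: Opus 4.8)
The plan is to reduce the claim immediately to the already-established count $|{\cal F}_m^p| = F^p_{m+p+1}$ (the order of $\Gamma^p_m$), by exploiting the two-case definition of ${\cal F}_n^{p\,\,\bullet}$.

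First I would treat the base range $n\in[0,p-1]$. Here the definition gives ${\cal F}_n^{p\,\,\bullet}=\{0^n\}$, so $|{\cal F}_n^{p\,\,\bullet}|=1$; since $n+1\in[1,p]$ we have $F^p_{n+1}=1$ directly from the defining recursion of the Fibonacci $p$-numbers, and the equality holds. Next, for $n\ge p$, I would observe that the map $v\mapsto v0^p$ is a bijection from ${\cal F}_{n-p}^{p}$ onto ${\cal F}_n^{p\,\,\bullet}$: it is injective because $v$ is recovered by deleting the last $p$ coordinates, and it is surjective by the very definition of ${\cal F}_n^{p\,\,\bullet}$. (A one-line remark is worth inserting: $v0^p$ really is a Fibonacci $p$-string whenever $v$ is one, since appending trailing $0$s can never bring two $1$s closer together.) Consequently $|{\cal F}_n^{p\,\,\bullet}|=|{\cal F}_{n-p}^{p}|$, and applying $|{\cal F}_m^p|=F^p_{m+p+1}$ with $m=n-p$ yields $|{\cal F}_n^{p\,\,\bullet}|=F^p_{(n-p)+p+1}=F^p_{n+1}$, completing the proof.

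I do not anticipate a genuine obstacle; the only point deserving a word of care is the index bookkeeping at the junction of the two cases, namely that $n=p$ belongs to the second regime and gives ${\cal F}_p^{p\,\,\bullet}=\{0^p\}$ of cardinality $1=F^p_{p+1}$, consistently with what the first-regime formula would predict. If one prefers a self-contained argument, the statement also follows by induction on $n$ using exactly the decomposition of Fibonacci $p$-strings recalled earlier in the Preliminaries (peeling off a leading $0$ or a leading $10^p$), but directly quoting the known value of $|{\cal F}_{n-p}^{p}|$ is the shortest route.
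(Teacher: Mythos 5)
Your proof is correct and is essentially the paper's own argument: the base range $n\in[0,p-1]$ gives cardinality $1=F^p_{n+1}$, and for $n\ge p$ the definition ${\cal F}_n^{p\,\,\bullet}=\{v0^p \st v\in{\cal F}_{n-p}^p\}$ gives a bijection with ${\cal F}_{n-p}^p$, whose cardinality $F^p_{n+1}$ is already known. You merely spell out the bijection and index checks a bit more explicitly than the paper does.
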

\begin{proof}
Indeed for $n<p$ $|{\cal F}_n^{p\,\,\bullet}|=1=F_{n+1}^p$ and  
	for $n\geq p$ $|{\cal F}_n^{p\,\,\bullet}|=|{\cal F}_{n-p}^{p}|=F_{n+1}^p$.
\end{proof}\qed

We can prove now Theorem~\ref{th:product-of-p-Fibo}.
The result is true for $k=0$ thus assume $k\geq1$.
For any Fibonacci $p$-string $u$, first note that in the string $u0^p$ each 1 is followed by at least $p$0s.
We already mentioned that we can associate, in a bijective way, to any subcube of $\Gamma_n^p$ with dimension $k$ its top vertex $u$ and its support, a subset $K$ of order $k$ of the set of coordinates $i$ such that $u_i=1$.

We can associate to the choice of the set $K$ a decomposition of the string $u0^p$ as $u0^p=s_010^ps_110^p\dots s_{k-1}10^ps_k $ where the $s_i$'s are $k+1$ substrings  possibly $\lambda$,  whose lengths sum to $n-k(p+1)+p$, such that the $1$s correspond to the coordinates in $K$. 
Since a substring of a Fibonacci $p$-string is a Fibonacci $p$-string, for $i\in[1,k-1]$, $10^ps_i1$ is a Fibonacci $p$-string, thus from proposition~\ref{pr:car}, 
 $s_i\in {\cal F}^{p\,\,\bullet}$. It is immediate that $s_0$ and $s_{k}$ are also in this set.

This construction can be reversed: first we choose the $k+1$ strings in ${\cal F}^{p\,\,\bullet}$, this choice define $K$  and then $u0^p$ by concatenation, separating them with $k$ substrings $10^p$. 
Since the number of strings in ${\cal F}^{p\,\,\bullet}$ of length $i$ is $F_{i+1}^p$, the number of ways to choose the  $k+1$ strings with lengths $i_0,i_1,\dots ,i_k$ is $F_{i_0+1}^pF_{i_1+1}^p\cdots F_{i_{k+1}}^p$ and we obtain
 $$
c_k(\Gamma_n^p)=\sum_{ {i_0,i_1,\ldots, i_{k} \geq 0 }\atop
i_0+i_1+\dots+i_{k}=n-k(p+1)+p}\!\!\!\!\!\!\!\!\!\!\!\!\!\!\!\!
F_{i_0+1}^pF_{i_1+1}^p\cdots F_{i_{k+1}}^p\,.
$$
Considering that $F_0^p=0$  increasing the summation variables by 1, 
the theorem follows.
\qed

In the case $k=1$ a consequence of the proof is the following proposition we will use later.
\begin{proposition}
\label{pr:Ei}
For $i\in[1,n]$ the number of edges in $\Gamma_n^p$ that use the direction $i$ is 

\begin{equation*}
\left|E_i\right|={F_i^p}F_{n-i+1}^p\,.
\end{equation*}
\end{proposition}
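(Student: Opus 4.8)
The plan is to specialize the bijective argument in the proof of Theorem~\ref{th:product-of-p-Fibo} to $k=1$ and to record which direction is used. An edge of $\Gamma_n^p$ using direction $i$ is precisely an induced $Q_1$ whose support is $\{i\}$, so its top vertex $u$ has $u_i=1$. Following that proof, the choice of the support $\{i\}$ produces a unique decomposition $u0^p = s_0\,10^p\,s_1$ in which the displayed $1$ occupies coordinate $i$; hence $|s_0| = i-1$, and since $|u0^p| = n+p$ we get $|s_1| = (n+p)-(i-1)-(p+1) = n-i$. The same argument shows $s_0\in {\cal F}_{i-1}^{p\,\,\bullet}$ and $s_1\in {\cal F}_{n-i}^{p\,\,\bullet}$, and conversely every pair $(s_0,s_1)\in {\cal F}_{i-1}^{p\,\,\bullet}\times {\cal F}_{n-i}^{p\,\,\bullet}$ reconstructs, via $u0^p = s_0\,10^p\,s_1$, exactly one edge of $\Gamma_n^p$ using direction $i$.

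Once this correspondence is in place, I would simply count: $|E_i| = |{\cal F}_{i-1}^{p\,\,\bullet}|\cdot|{\cal F}_{n-i}^{p\,\,\bullet}|$, and Proposition~\ref{pr:car} gives $|{\cal F}_{i-1}^{p\,\,\bullet}| = F_i^p$ and $|{\cal F}_{n-i}^{p\,\,\bullet}| = F_{n-i+1}^p$, which is the claimed identity. As a consistency check, summing over $i\in[1,n]$ recovers $|E(\Gamma_n^p)| = \sum_{i=1}^n F_i^p F_{n-i+1}^p$, i.e. the $k=1$ case of Theorem~\ref{th:product-of-p-Fibo}.

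There is no real obstacle here; the one point to watch is the boundary behaviour of the decomposition when $i$ is near $1$ or $n$, so that $s_0$ or $s_1$ is short or empty. But this is harmless: $s_0$ is the prefix of $u$ preceding coordinate $i$, and the Fibonacci $p$-string condition forces a block of at least $p$ zeros before that $1$ (or $s_0$ is a string of zeros), so $s_0$ never ends $10^r$ with $r<p$; likewise $s_1$ ends with the appended block $0^p$ (or is a suffix of it), so $s_1\in {\cal F}^{p\,\,\bullet}$ as well. Thus the statement is just the $k=1$ instance of Theorem~\ref{th:product-of-p-Fibo}, refined by the position of the unique $1$.
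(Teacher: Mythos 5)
Your proposal is correct and is essentially the paper's own argument: the paper obtains Proposition~\ref{pr:Ei} exactly as the $k=1$ case of the bijection in the proof of Theorem~\ref{th:product-of-p-Fibo}, refined (as you do) by the position of the unique support coordinate, with Proposition~\ref{pr:car} supplying the counts $|{\cal F}_{i-1}^{p\,\,\bullet}|=F_i^p$ and $|{\cal F}_{n-i}^{p\,\,\bullet}|=F_{n-i+1}^p$. Your boundary discussion of $s_0,s_1$ matches the paper's observation that these end-strings lie in ${\cal F}^{p\,\,\bullet}$, so there is nothing missing.
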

From Theorem~\ref{th:product-of-p-Fibo} the sequence $(c_k(\Gamma_n^p))_{n \geq 0}$  is the offset by $kp-p-1$
of the $k$-fold convolution of the sequence $(F_{n}^p)_{n \geq 0}$ with itself. As a consequence we obtain:

\begin{corollary}
\label{co:gen-funct-ck}
Let $k\geq 0$ be an integer. Then the generating function of the sequence 
$(c_k(\Gamma_n^p))_{n\geq 0}$ is
$$\sum_{n \ge 0}c_k(\Gamma_n^p) x^n=\frac{x^{kp-p+k}}{(1-x-x^{p+1})^{k+1}}\,.$$
\end{corollary}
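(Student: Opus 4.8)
The plan is to use Theorem~\ref{th:product-of-p-Fibo}, which already identifies $c_k(\Gamma_n^p)$ as a shifted $k$-fold convolution of the sequence $(F_n^p)_{n\geq 0}$ with itself. Recall that the generating function of $(F_n^p)_{n\geq 0}$ is $\sum_{n\geq 0}F_n^p x^n=\frac{x}{1-x-x^{p+1}}$, as stated in the Preliminaries. The key algebraic fact I would invoke is that the generating function of a convolution of sequences is the product of their generating functions, and that multiplying a generating function by $x^r$ shifts (offsets) the underlying sequence by $r$.

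First I would fix $k\geq 0$ and define $d_j = c_k(\Gamma_j^p)$ for $j\geq 0$. By Theorem~\ref{th:product-of-p-Fibo}, for each $n$ we have
$$
d_n = \sum_{i_0+i_1+\dots+i_k = n-kp+p+1} F_{i_0}^p F_{i_1}^p\cdots F_{i_k}^p\,,
$$
with the convention that the empty sum is $0$ when $n-kp+p+1<0$ and that each $i_j\geq 0$. Let $(g_m)_{m\geq 0}$ be the sequence defined by $g_m = \sum_{i_0+\dots+i_k=m} F_{i_0}^p\cdots F_{i_k}^p$, i.e.\ the $(k{+}1)$-fold convolution of $(F_n^p)$ with itself (a product of $k+1$ copies). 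Its generating function is
$$
\sum_{m\geq 0} g_m x^m = \left(\sum_{n\geq 0} F_n^p x^n\right)^{k+1} = \frac{x^{k+1}}{(1-x-x^{p+1})^{k+1}}\,.
$$
Since $d_n = g_{n-kp+p+1}$ for all $n\geq 0$ (both sides being $0$ when the index is negative because $F_0^p=0$ forces $g_m=0$ for $m<k+1$, and indeed $n-kp+p+1$ ranges correctly), the sequence $(d_n)_{n\geq 0}$ is the offset of $(g_m)_{m\geq 0}$ by $kp-p-1$. Hence
$$
\sum_{n\geq 0} d_n x^n = x^{kp-p-1}\sum_{m\geq 0} g_m x^m = \frac{x^{kp-p-1}\cdot x^{k+1}}{(1-x-x^{p+1})^{k+1}} = \frac{x^{kp-p+k}}{(1-x-x^{p+1})^{k+1}}\,,
$$
which is the claimed formula.

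There is really no serious obstacle here: the only point demanding a little care is checking that the offset is the right one and that no spurious low-order terms appear, which is exactly where the fact $F_0^p=0$ is used — it guarantees $g_m=0$ for $m\leq k$, so the shift $m\mapsto m+(kp-p-1)$ produces a genuine power series in $x$ with no negative exponents precisely when $kp-p-1 \geq -(k+1)$, i.e.\ always for $k\geq 0$, $p\geq 1$. Alternatively, and perhaps more cleanly, one can avoid the offset bookkeeping by summing the generating function identity of Theorem~\ref{th:product-of-p-Fibo} directly: write $\sum_{n\geq 0}c_k(\Gamma_n^p)x^n = x^{kp-p-1}\prod_{j=0}^{k}\big(\sum_{i\geq 0}F_i^p x^i\big)$ and simplify. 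Either way the corollary is immediate from Theorem~\ref{th:product-of-p-Fibo}.
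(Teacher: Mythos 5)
Your argument is correct and is essentially the paper's own proof: the paper likewise reads off from Theorem~\ref{th:product-of-p-Fibo} that $(c_k(\Gamma_n^p))_{n\geq 0}$ is the offset by $kp-p-1$ of the $(k{+}1)$-fold product (convolution) of $(F_n^p)_{n\geq 0}$, and multiplies $\bigl(\tfrac{x}{1-x-x^{p+1}}\bigr)^{k+1}$ by $x^{kp-p-1}$; you have merely written out the bookkeeping the paper leaves implicit. One small slip in your closing remark: the inequality $kp-p-1\geq -(k+1)$, i.e.\ $k(p+1)\geq p$, does \emph{not} hold at $k=0$ (for $p\geq 1$), and indeed at $k=0$ the stated formula $x^{-p}/(1-x-x^{p+1})$ acquires genuine negative powers of $x$ (coming from $F_m^p=1$ for $m\in[1,p]$), so the identity as a power series really only holds for $k\geq 1$ --- an edge case the paper's statement itself glosses over, and harmless for the rest of your derivation.
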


%%%%%%%%%%%%%%%%%%%%%%%%%%%%%%%%%%%%%%%%%%%%%%%%%%%%%%%%%%
%%%%%%%%%%%%%%%%%%%%%%%%%%%%%%%%%%%%%%%%%%%%%%%%%%%%%%%%%%
\section{Distance cube polynomial of  Fibonacci $p$-cubes}
\label{sec:distancecube}
%%%%%%%%%%%%%%%%%%%%%%%%%%%%%%%%%%%%%%%%%%%%%%%%%%%%%%%%%%
%%%%%%%%%%%%%%%%%%%%%%%%%%%%%%%%%%%%%%%%%%%%%%%%%%%%%%%%%%
This section is devoted to the determination of the generating function of the distance cube polynomial using the weight enumerator polynomial. It also provides, as a specific case, an elegant way to prove Theorem~\ref{thm:cubepolyfibgene}.

Let $\A$ be a set of strings generated freely (as a monoid) by a finite or  infinite alphabet
$E$. 
This means that every strings $s \in \A$ can be written 
uniquely as a concatenation of zero or more strings from $E$. Note that the empty string $\lambda$ belongs to $\A$.
Associate with any string $s \in \A$ a polynomial, possibly multivariate,  $\theta_s$. We will say that $s \rightarrow \theta_s$ is \emph{concatenation multiplicative} over $\A$ if $\theta_{\lambda}=1$ and $\theta_{e_1e_2\ldots e_k}=\theta_{e_1}\theta_{e_2}\ldots\theta_{e_k}$ for any choice of elements $e_i$ in $E$.

Note that if $s \rightarrow \theta_s$ and $s \rightarrow \phi_s$ are concatenation multiplicative then $s \rightarrow \theta_s\phi_s$ is concatenation multiplicative.
Let us recall the following proposition proved in~\cite{M-2024}.

\begin{proposition}\label{pro:mono}

Let $\A$ be a set of strings generated freely, as a monoid, by the alphabet $E$ and $s \rightarrow \theta_s$ concatenation multiplicative over $\A$ then
\begin{equation*}
\sum_{s\in \A}{\theta_s}=\frac{1}{1-\sum_{s\in E}{\theta_s}}\,.
\end{equation*}
\end{proposition}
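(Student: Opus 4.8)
The plan is to exploit the freeness of $\A$ over $E$ in order to rewrite the left-hand side as a geometric series. By freeness, every $s\in\A$ can be written in a unique way as a concatenation $s=e_1e_2\cdots e_k$ of $k\geq 0$ letters of $E$, the case $k=0$ being $s=\lambda$. First I would sort the strings of $\A$ by this number of factors: for each $k\geq 0$ let $\A_k\subseteq\A$ be the set of strings whose unique decomposition uses exactly $k$ letters, so that $\A$ is the disjoint union of the $\A_k$ over $k\geq 0$ and, for each $k$, the concatenation map $E^k\to\A_k$, $(e_1,\ldots,e_k)\mapsto e_1e_2\cdots e_k$, is a bijection.

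Next I would evaluate the contribution of a single block. Using this bijection together with the concatenation multiplicativity of $s\mapsto\theta_s$,
\begin{equation*}
\sum_{s\in\A_k}\theta_s=\sum_{(e_1,\ldots,e_k)\in E^k}\theta_{e_1}\theta_{e_2}\cdots\theta_{e_k}=\Bigl(\sum_{e\in E}\theta_e\Bigr)^{k},
\end{equation*}
where for $k=0$ the empty product equals $\theta_\lambda=1$. Summing this identity over all $k\geq 0$ and using that $\A$ is the disjoint union of the $\A_k$ yields
\begin{equation*}
\sum_{s\in\A}\theta_s=\sum_{k\geq 0}\Bigl(\sum_{e\in E}\theta_e\Bigr)^{k}=\frac{1}{1-\sum_{e\in E}\theta_e}\,,
\end{equation*}
which is the asserted formula.

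The one point that genuinely needs care --- and the main obstacle --- is to make all of this rigorous when the alphabet $E$ is infinite: one must specify in which ring the infinite sums $\sum_{s\in\A}\theta_s$ and $\sum_{e\in E}\theta_e$ are taken and why the geometric series identity $\sum_{k\geq 0}r^k=(1-r)^{-1}$ is available there. The resolution is that the polynomials $\theta_s$ carry a natural grading by the length of $s$ (equivalently, by the exponents of the formal variables that record lengths), so that every sum in sight is to be read in the ring of formal power series in those variables. Since no letter of $E$ is the empty string, each $\theta_e$ with $e\in E$ has zero constant term; hence $r:=\sum_{e\in E}\theta_e$ is a well-defined power series with zero constant term, $1-r$ is invertible, and $\sum_{k\geq 0}r^k=(1-r)^{-1}$ holds in that ring. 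The same grading guarantees that each fixed monomial is hit by only finitely many $s\in\A$, which is precisely what licenses the regrouping of $\sum_{s\in\A}\theta_s$ according to the number $k$ of factors. I would finish by remarking that this is exactly the framework used in the applications of the proposition that follow, where $\theta_s$ records the length of $s$ together with additional data such as its weight.
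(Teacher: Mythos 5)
Your proof is correct, and it is the standard argument for this statement: decompose each string uniquely into its $k$ letters, use multiplicativity to identify the sum over $k$-fold products with the $k$-th power of $\sum_{e\in E}\theta_e$, and sum the geometric series in the ring of formal power series, where the grading by string length (and the fact that no letter is empty) makes every sum well defined. The paper itself does not reprove the proposition --- it simply cites it from an earlier reference --- so there is nothing to contrast; your careful remark about the infinite-alphabet case and the zero constant term of $\sum_{e\in E}\theta_e$ is exactly the point that makes the statement rigorous, and it is all that is needed for the application here, where $E=\{0,10^p\}$ and $\theta_s=x^{w(s)}t^{l(s)}$.
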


\begin{definition}
Let $G$ be a subgraph of $Q_n$ and $w_d(G)$, $d\ge 0$, be the number of vertices of $G$ with weight $d$. Then the weight enumerator polynomial of $G$ is the counting polynomial
$$W_{G}(x)=\sum_{u\in V(G)} x^{w(u)}= \sum_{d\geq 0} w_{d}(G) x^d\,.$$
\end{definition}

Let us call {\em extended Fibonacci $p$-strings} the strings of $\bigcup_{n\geq 0}\{u0^p\st u\in{\cal F}^p_n\}$ obtained by adding $0^p$ to Fibonacci $p$-strings.
As noticed in Section~\ref{sec:basic}
 extended Fibonacci $p$-strings can be uniquely obtained as concatenations of strings from $E=\{0,10^p\}$. The monoid generated freely by $E$ is thus
\begin{equation}\label{eq:partition}
\A= \{\lambda\}\cup \{0^i\st 1\leq i \leq p-1\}\cup_{n\geq 0}\{u0^p\st u\in{\cal F}^p_n\}={\cal F}^{p\,\,\bullet}\,.
\end{equation}

Associate with any string binary $s$ the polynomial $\theta_s(x,t)=x^{w(s)}t^{l(s)}$ where $l(s)$ and $w(s)$ are the length and weight of the string, respectively. It is immediate that  $s \rightarrow \theta_s$ is concatenation multiplicative over $\A$ and that $\theta_{s0^p}=t^p\theta_{s}$.  Since   $\sum_{s\in E}{\theta_s}=t+xt^{p+1}$, using  Proposition~\ref{pro:mono}, we obtain 
\begin{equation}\label{eq:somoverA}
\sum_{s\in \A}{\theta_s}=\frac{1}{1-t-xt^{p+1}}\,.
\end{equation}
From the partition~(\ref{eq:partition})  and the equality~(\ref{eq:somoverA}) we obtain
\begin{equation}\label{eq:geneW}
\sum_{n\geq0}\sum_{s\in{\cal F}^p_n}t^p{\theta_s}=\frac{1}{1-t-xt^{p+1}}-1-t\ldots-t^{i}-\ldots-t^{p-1}\,.
\end{equation}
The right side of (\ref{eq:geneW}) can be simplified as
\begin{equation*}
\frac{t^p+xt^{p+1}+\ldots+xt^{p+i}+\ldots+xt^{2p}}{1-t-xt^{p+1}}=t^p\frac{1+xt+\ldots+xt^i+\ldots+xt^p}{{1-t-xt^{p+1}}}\,.
\end {equation*}
The left side is
\begin{equation*}
t^p\sum_{n\geq0}\sum_{s\in{\cal F}^p_n}{\theta_s}=t^p\sum_{n\geq0}\sum_{s\in V(\Gamma^p_n)}{x^{w(s)}t^n}=t^p\sum_{n\geq0}{W_{ \Gamma^p_n}(x)t^n}\,. 
\end {equation*}
Therefore the generating function of $W_{ \Gamma^p_n}(x)$ is given by the following result.
\begin{theorem} \label{thm:weightfibgene}
The generating function of $W_{\Gamma_n^p}(x)$ is
\begin{equation*}
\sum_{n\geq0}{W_{\Gamma_n^p}(x)t^n}=\frac{1+xt+\ldots+xt^i+\ldots+xt^p}{1-t-xt^{p+1}}=A(x,t)\,.
\end{equation*}
\end{theorem}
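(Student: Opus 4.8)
The plan is to derive the generating function of $W_{\Gamma_n^p}(x)$ directly from the structural decomposition of Fibonacci $p$-strings, exactly as the paragraph preceding the statement sets up. First I would recall that the set $\A = {\cal F}^{p\,\,\bullet}$ of extended Fibonacci $p$-strings is freely generated by $E = \{0, 10^p\}$, and that the map $s \mapsto \theta_s(x,t) = x^{w(s)} t^{l(s)}$ is concatenation multiplicative. Applying Proposition~\ref{pro:mono} with $\sum_{s \in E} \theta_s = t + x t^{p+1}$ gives $\sum_{s \in \A} \theta_s = \frac{1}{1 - t - x t^{p+1}}$, which is equation~(\ref{eq:somoverA}).

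The second step is to split the sum over $\A$ using the partition~(\ref{eq:partition}): $\A$ consists of the empty string, the short all-zero strings $0^i$ for $1 \le i \le p-1$, and the strings $u0^p$ with $u \in {\cal F}^p_n$ for $n \ge 0$. The short strings contribute $1 + t + \cdots + t^{p-1}$, and each $u0^p$ contributes $\theta_{u0^p} = t^p \theta_u$. Isolating the last family yields equation~(\ref{eq:geneW}):
\begin{equation*}
\sum_{n\ge 0}\sum_{s\in{\cal F}^p_n} t^p \theta_s = \frac{1}{1-t-xt^{p+1}} - 1 - t - \cdots - t^{p-1}\,.
\end{equation*}

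The third step is purely algebraic: simplify the right-hand side by putting it over the common denominator $1 - t - x t^{p+1}$. The numerator becomes $1 - (1-t-\cdots-t^{p-1})(1-t-xt^{p+1})$; expanding and telescoping the geometric part collapses this to $t^p + x t^{p+1} + \cdots + x t^{2p}$, which factors as $t^p(1 + xt + \cdots + x t^p)$. Meanwhile the left-hand side is $t^p \sum_{n\ge 0}\sum_{s \in {\cal F}^p_n} x^{w(s)} t^{l(s)}$; since every $s \in {\cal F}^p_n$ has length exactly $n$, the inner sum is $\sum_{s \in V(\Gamma^p_n)} x^{w(s)} t^n = W_{\Gamma^p_n}(x) t^n$. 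Cancelling the common factor $t^p$ from both sides gives $\sum_{n\ge 0} W_{\Gamma^p_n}(x) t^n = \frac{1 + xt + \cdots + xt^p}{1 - t - x t^{p+1}}$, and comparison with the definition of $A(y,t)$ (with $y$ specialized to $x$) identifies this with $A(x,t)$, completing the proof.

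There is essentially no hard obstacle here: the nontrivial input — the free-monoid enumeration of Proposition~\ref{pro:mono} — has already been established, and everything else is bookkeeping. The one place that requires a little care is the algebraic simplification of the right-hand side of~(\ref{eq:geneW}); one must verify that the finite geometric correction terms combine cleanly with the $\frac{1}{1-t-xt^{p+1}}$ term so that no stray low-degree monomials survive, and that the resulting numerator is exactly $t^p$ times the polynomial $1 + xt + \cdots + xt^p$ appearing in $A(x,t)$. Since this is the same computation already recorded in equation~(\ref{eq:A(y,t)}) for $A(y,t)$ (with the roles of $y$ and $x$ swapped), it can simply be quoted rather than redone.
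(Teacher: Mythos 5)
Your proposal is correct and follows essentially the same route as the paper: the free-monoid enumeration of Proposition~\ref{pro:mono} applied to $E=\{0,10^p\}$, the partition~(\ref{eq:partition}) of ${\cal F}^{p\,\,\bullet}$ to isolate the extended strings $u0^p$, and the same algebraic simplification (already recorded in~(\ref{eq:A(y,t)})) to recover $A(x,t)$. No gaps.
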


If $u$ and $v$ are vertices of a graph $G$, the \emph{interval} $I_G(u,v)$ between $u$ and $v$ (in $G$) is the set of vertices lying on shortest $u,v$-path, that is, $I_G(u,v) = \{w | d(u,v) = d(u,w) + d(w,v)\}$. We will also write $I(u,v)$ when $G$ will be clear from the context. 
A subgraph $G$ of a graph $H$ is an \emph{isometric subgraph} 
if the distance between any vertices of $G$ equals the distance 
between the same vertices in $H$. 
Isometric subgraphs of hypercubes are called \emph{partial cubes}. 
The {\em dimension} of a partial cube $G$ is the smallest integer
$d$ such that $G$ is an isometric subgraph of $Q_d$. 
Many important classes of graphs are partial cubes, 
in particular trees, median graphs, benzenoid graphs, phenylenes, 
grid graphs and bipartite torus graphs. In addition, Fibonacci  
and Lucas cubes are partial cubes as well, see \cite{K-2005}.

If $G=(V(G),E(G))$ is a graph and $X\subseteq V(G)$, then $\langle X\rangle$ denotes the subgraph of $G$ induced by $X$.  
Let $\le$ be a partial order on $B^n$ defined with $u_1\ldots u_n \le v_1\ldots v_n$ if $u_i\le v_i$ holds for $i\in [1,n]$. For $X \subseteq B^n$ we define the graph  $Q_n(X)$ as the subgraph of $Q_n$ with  
$$Q_n(X) = \left\langle \{u\in B^n| u\le x\ {\rm for\ some}\ x\in X \} \right\rangle$$
and say that $Q_n(X)$ is the {\em daisy cube generated by $X$}.
 %Note that  if $\widehat{X}$ is the antichain consisting of the maximal elements of the poset $(X,\le)$, then $Q_n(\widehat{X}) = Q_n(X)$. As noticed in the daisy cube introductory paper \cite{KM-2019a} we can alternatively say that $$Q_n(X) = \left\langle \bigcup_{x\in X} I_{Q_n}(x,0^n)\right\rangle=\left\langle \bigcup_{x\in \widehat{X}} I_{Q_n}(x,0^n)\right\rangle.$$
 Finally we will say that a graph $G$ is {\em a daisy cube} if there exist an isometrical embedding  of $G$ in some hypercube $Q_n$ and a subset $X$ of $B^n$ such that $G$ is the daisy cube generated by $X$. Such an embedding will be called a {\em proper embedding}.

By construction daisy cubes are partial cubes. It is immediate that the class of daisy cubes is closed under the Cartesian product.

Fibonacci cubes, Lucas cubes, Alternate Lucas-cube~\cite{ESS-2021e} and  Pell graph~\cite{M-2019}\cite[Theorem 9.68 for a proof]{EKM-2023} are examples of daisy cubes.

Since changing a $1$ to $0$ in a Fibonacci $p$-string gives a Fibonacci $p$-string, Fibonacci $p$-cubes are daisy cubes generated by the set of maximal $p$-strings. For the same reason Lucas $p$-cubes belong  also to this class.

if $G$ is a daisy cube, then the polynomials $D_{G,0^n}$ and $C_{G}$ are completely determined by the weight polynomial. More precisely it is proved in~\cite[Corollary 2.6]{KM-2019a} that for a daisy cube
\begin{equation*}
D_G(x,q)=C_G(x+q-1)=W_G(x+q)\,.
\end{equation*}

Therefore replacing $x$ by $x+1$ in the generating function of the weight polynomial obtained in Theorem~\ref{thm:weightfibgene} gives the generating function of the cube polynomial, thus an independent proof of Theorem~\ref{thm:cubepolyfibgene}.

We deduce also that $D_{\Gamma_n^p}(x,q)=D_{\Gamma_n^p}(q,x)$  and replacing $x$ by $x+q$ in the generating function of $ W_{\Gamma_n^p}(x)$ gives that of the distance cube polynomial. 
\begin{theorem} \label{thm:dcubepolyfibgene}
The generating function of $D_{\Gamma_n^p}(x,q)$ is
\begin{equation*}
\sum_{n\geq 0}D_{\Gamma_n^p}(x,q)t^n=\frac{1+(q+x)t+\ldots+(q+x)t^i+\ldots+(q+x)t^p}{1-t-(q+x)t^{p+1}}\,.
\end{equation*}
\end{theorem}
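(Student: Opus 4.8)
The plan is to reduce the statement to Theorem~\ref{thm:weightfibgene} via the daisy cube identity recalled just before it. First I would note that $0^n\in\Fib_n^p$, so the basepoint is legitimate and $D_{\Gamma_n^p}(x,q)$ is well defined with respect to $0^n$. Next I would record that $\Gamma_n^p$, in its standard embedding in $Q_n$, is a daisy cube: the set $\Fib_n^p$ is closed downward under the coordinatewise order $\le$, since turning a $1$ into a $0$ cannot create a pair of $1$s separated by fewer than $p$ zeros; hence $\Gamma_n^p=Q_n(X)$ with $X$ the set of maximal Fibonacci $p$-strings, and this embedding is proper. This is exactly the observation already made in the text.

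With $\Gamma_n^p$ identified as a daisy cube, I would invoke \cite[Corollary 2.6]{KM-2019a}, which gives $D_G(x,q)=W_G(x+q)$ for every daisy cube $G$. Applying this for each $n$ and summing against $t^n$ yields
\begin{equation*}
\sum_{n\geq 0}D_{\Gamma_n^p}(x,q)\,t^n=\sum_{n\geq 0}W_{\Gamma_n^p}(x+q)\,t^n=A(x+q,t),
\end{equation*}
where the last equality is Theorem~\ref{thm:weightfibgene}. Since $A(y,t)=\dfrac{1+yt+\ldots+yt^i+\ldots+yt^p}{1-t-yt^{p+1}}$, substituting $y=x+q$ produces precisely the claimed expression, and as a byproduct one reads off the symmetry $D_{\Gamma_n^p}(x,q)=D_{\Gamma_n^p}(q,x)$ and, at $q=1$, recovers Theorem~\ref{thm:cubepolyfibgene}.

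I do not expect a genuine obstacle here: the entire content sits in Theorem~\ref{thm:weightfibgene} together with the daisy cube machinery of~\cite{KM-2019a}, and the final step is a single variable substitution. The only points worth a sentence of justification are that $\Gamma_n^p$ really is a daisy cube under the natural embedding (the downward closure of $\Fib_n^p$) and that the specializations above serve as consistency checks. If one wished to avoid citing~\cite{KM-2019a}, an alternative would be to redo the computation of Theorem~\ref{thm:weightfibgene} while tracking, for an induced hypercube with bottom vertex built from an extended Fibonacci $p$-string, the coordinates that remain free versus those fixed to $0$; but this is strictly more laborious than the one-line substitution and I would not pursue it.
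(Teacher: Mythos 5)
Your proposal is correct and follows essentially the same route as the paper: identify $\Gamma_n^p$ as a daisy cube, apply the identity $D_G(x,q)=W_G(x+q)$ from \cite[Corollary 2.6]{KM-2019a}, and substitute $y=x+q$ in the generating function $A(x,t)$ of Theorem~\ref{thm:weightfibgene}. No gaps to report.
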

\begin{theorem} \label{th:dcube}
If $n\ge 0$, then,  
\begin{equation*}
D_{\Gamma_n^p}(x,q)=\sum_{a = 0}^{\left\lfloor \frac{n+p}{p+1}\right\rfloor}\binom{n-ap+p}{a}(q+x)^{a}\,,
\end{equation*}
and the number of induced subgraphs of $\Gamma_n^p$ isomorphic to  $Q_k$ at distance $d$ of $0^n$ is 
\begin{equation*}
c_{k,d}(\Gamma_n^p)=\binom{n-(k+d)p+p}{k+d}\binom{k+d}{k}\,.
\end{equation*}
\end{theorem}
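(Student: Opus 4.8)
The plan is to read off both formulae directly from the generating function established in Theorem~\ref{thm:dcubepolyfibgene}, which says $\sum_{n\geq0}D_{\Gamma_n^p}(x,q)t^n=A(q+x,t)$. Since equation~(\ref{eq:A(y,t)sum}) already supplies the full expansion $A(y,t)=\sum_{n\geq0}\sum_{a\geq0}\binom{n-ap+p}{a}y^at^n$ (with the convention $\binom{b}{a}=0$ for $a>b$), I would substitute $y=q+x$ and identify the coefficient of $t^n$ to get $D_{\Gamma_n^p}(x,q)=\sum_{a\geq0}\binom{n-ap+p}{a}(q+x)^a$. The only point to verify is that the sum may be truncated at $a=\lfloor\frac{n+p}{p+1}\rfloor$: for $a$ larger than this one has $n-ap+p<a$, so $\binom{n-ap+p}{a}=0$, which is precisely the degree bound already recorded in Proposition~\ref{prop:Hammingweight} and Theorem~\ref{thm:cubepolyfib}. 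Equivalently one can invoke the daisy cube identity $D_G(x,q)=W_G(x+q)$ together with Theorem~\ref{thm:weightfibgene}, which by the same coefficient extraction gives $W_{\Gamma_n^p}(x)=\sum_a\binom{n-ap+p}{a}x^a$ and hence $D_{\Gamma_n^p}(x,q)=W_{\Gamma_n^p}(x+q)$; either route is a one-line computation.

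For the second formula I would expand $(q+x)^a=\sum_{j=0}^a\binom{a}{j}x^jq^{a-j}$ inside the first formula and collect the coefficient of $x^kq^d$. This forces $j=k$ and $a-j=d$, hence $a=k+d$, so the contribution is $\binom{n-(k+d)p+p}{k+d}\binom{k+d}{k}$. Comparing with the definition $D_{\Gamma_n^p}(x,q)=\sum_{k,d}c_{k,d}(\Gamma_n^p)x^kq^d$ in~(\ref{eqn:defCGq}) yields $c_{k,d}(\Gamma_n^p)=\binom{n-(k+d)p+p}{k+d}\binom{k+d}{k}$.

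I expect no real obstacle here, since the content has been front-loaded into Theorems~\ref{thm:dcubepolyfibgene} and~\ref{thm:weightfibgene}. If a self-contained combinatorial derivation of $c_{k,d}$ is preferred, it mirrors the proof of Theorem~\ref{thm:cubepolyfib}: an induced $Q_k$ at distance $d$ from $0^n$ has bottom vertex of weight $d$ and top vertex a Fibonacci $p$-string $u$; since all $k$ support coordinates are $0$ in $b(H)$ and $1$ in $t(H)$ while the rest agree with $b(H)$, one has $w(u)=d+k$. Proposition~\ref{prop:Hammingweight} then gives $\binom{n-(k+d)p+p}{k+d}$ choices for $u$, after which there are $\binom{k+d}{k}$ ways to select the $k$-element support among the coordinates where $u$ equals $1$, and by Proposition~\ref{pro:bt} the pair $(t(H),b(H))$ determines $H$. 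The one minor thing worth spelling out in that variant is exactly this weight count $w(t(H))=w(b(H))+k$.
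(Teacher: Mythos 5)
Your proposal is correct, and in fact it contains both of the routes the paper has in mind, but with the emphasis reversed. The paper's official proof of this theorem is the combinatorial one: it determines $c_{k,d}(\Gamma_n^p)$ first (an induced $Q_k$ at distance $d$ from $0^n$ has top vertex of weight $d+k$, so Proposition~\ref{prop:Hammingweight} gives $\binom{n-(k+d)p+p}{k+d}$ choices for the top vertex and $\binom{k+d}{k}$ choices of support), and then sums over $a=k+d$ to obtain the closed form for $D_{\Gamma_n^p}(x,q)$; the generating-function derivation via setting $y=x+q$ in~(\ref{eq:A(y,t)sum}) is only mentioned there as a one-sentence alternative. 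You run it the other way: extract $D_{\Gamma_n^p}(x,q)$ from Theorem~\ref{thm:dcubepolyfibgene} together with~(\ref{eq:A(y,t)sum}) (or, equivalently, from $D_G(x,q)=W_G(x+q)$ and Theorem~\ref{thm:weightfibgene}), then read off $c_{k,d}$ by expanding $(q+x)^a$ and matching the coefficient of $x^kq^d$, which forces $a=k+d$. This is legitimate and not circular, since Theorem~\ref{thm:dcubepolyfibgene} is established independently via the daisy-cube identity and the weight enumerator, and your truncation argument ($\binom{n-ap+p}{a}=0$ once $n-ap+p<a$) correctly justifies the stated degree bound. Your appended ``self-contained'' variant is precisely the paper's proof, and you actually spell out the one point the paper leaves implicit, namely that $w(t(H))=w(b(H))+k$ so the top vertex has weight exactly $d+k$; either presentation is complete.
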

\begin{proof}
By adapting the proof of Theorem~\ref{thm:cubepolyfib} the value of $c_{k,d}(\Gamma_n^p)$ can be obtained by combinatorial arguments. Indeed the top vertex $u$ of subgraph isomorphic to a $Q_k$ at distance $d$ of $0^n$ satisfies $w(u)= d+k$. Note that $d+k\leq{\left\lfloor \frac{n+p}{p+1}\right\rfloor}$. The value of $c_{k,d}(\Gamma_n^p)$ follows.

It is then easy to check that $\sum_{k,d} c_{k,d}(\Gamma_n^p)x^kq^d =\sum_{a}\binom{n-ap+p}{a}(q+x)^{a}$ since some $a$ contributes to $x^kq^d$ if and only if only $a=k+d$.
\end{proof}\qed

Like the alternative proof of Theorem~\ref{thm:cubepolyfib} it is also possible to derive $D_{\Gamma_n^p}(x,q)$ and thus $c_{k,d}(\Gamma_n^p)$ from its  generating function setting  $y= x+q$ in the equality~(\ref{eq:A(y,t)sum}).

Considering $q$-analogues of Fibonacci $p$-numbers it is also possible to obtain expression of $c_{k,d}(\Gamma_n^p)$ similar to that of $c_{k}(\Gamma_n^p)$ in Theorem~\ref{th:product-of-p-Fibo} and the generating function of $c_{k,d}(\Gamma_n^p)$ for a fixed $k$.
%%%%%%%%%%%%%%%%%%%%%%%%%%%%%%%%%%%%%%%%%%%%%%%%%%%%%%%%%%
%%%%%%%%%%%%%%%%%%%%%%%%%%%%%%%%%%%%%%%%%%%%%%%%%%%%%%%%%%
\section{Wiener index and Mostar Index of Fibonacci $p$-cubes}
\label{sec:WM Index}
%%%%%%%%%%%%%%%%%%%%%%%%%%%%%%%%%%%%%%%%%%%%%%%%%%%%%%%%%%
%%%%%%%%%%%%%%%%%%%%%%%%%%%%%%%%%%%%%%%%%%%%%%%%%%%%%%%%%%
The \emph{Wiener index} $W(G)$ of a connected graph $G$ is defined as the sum of all distances between pairs of vertices of $G$. Hence,
$$ W(G) =\sum_{\{u,v\} \subset V(G)}d(u,v).$$
This distance invariant is important in mathematical chemistry. The Wiener index of $\Gamma_n$ has been determined in  ~\cite{KM-2012b}.

Again in the context of graph chemical theory the Mostar index $\mathit{Mo}(G)$ have been introduced in  \cite{DMSTZ-2018} . It measures how far $G$ is from being distance-balanced.
Let $G=(V(G),E(G))$ be a connected graph. For any edge $uv\in E(G)$, let $n_{u,v}$ denote the number of vertices wich are closer to $u$ than to $v$. The \emph{Mostar index} $\mathit{Mo}(G)$  is defined as
$$ \mathit{Mo}(G) =\sum_{uv \in E(G)}|n_{u,v}-n_{v,u}|.$$ 
The Mostar index of Fibonacci cubes has been determined in \cite{ESS-2021c}.

The Wiener index of $\Gamma_n^p$ was already determined~\cite[Theorem~5.7]{WY-2022}
but this result is a particular case of a formula satisfied by daisy cubes. Indeed the Wiener and Mostar indices of daisy cubes are completely determined by $|V(G)|$ and the sequence  $|E_i|$, for $i\in[1,n])$, of the number of edges using the direction $i$.

 \begin{theorem}~\cite[Corollary~4.1]{M-2022a}
\label{th:indices}
Let $G$ be a daisy cube  properly embedded into $Q_n$. For $i\in[1,n]$ let $|E_i|$ be the number of edges using the direction $i$. Then the Wiener and the Mostar indices of $G$ are 
\begin{align*}
 W(G)&=|V(G)||E(G)|-\sum_{i=1}^n |E_i|^2\\
\mathit{Mo}(G)&=|V(G)||E(G)|-2\sum_{i=1}^n |E_i|^2\,.
\end{align*}

\end{theorem}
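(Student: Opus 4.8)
The plan is to exploit two facts: that a daisy cube, being an isometric subgraph of $Q_n$, inherits Hamming distances, and that its downward-closed structure forces, in every coordinate direction, the ``$1$-side'' to be no larger than the ``$0$-side''. Since the embedding is isometric, $d_G(u,v)=|\{i\in[1,n]:u_i\ne v_i\}|$ for all $u,v\in V(G)$. For $i\in[1,n]$ let $W_i^0$ and $W_i^1$ be the sets of vertices of $G$ whose $i$-th coordinate equals $0$ and $1$ respectively, so that $\{W_i^0,W_i^1\}$ is a partition of $V(G)$ and $E_i$ is exactly the set of edges with one endpoint in each part. Because $G$ is a daisy cube, whenever $u\in V(G)$ has $u_i=1$ the vertex $u+\delta_i$ (obtained by changing that $1$ to $0$) satisfies $u+\delta_i\le u$, hence $u+\delta_i\in V(G)$; thus $u\mapsto u+\delta_i$ maps $W_i^1$ injectively into $W_i^0$ and sets up a bijection between $W_i^1$ and $E_i$. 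Consequently
\begin{equation*}
|E_i|=|W_i^1|\le|W_i^0|=|V(G)|-|E_i|,\qquad\text{so}\qquad 2|E_i|\le|V(G)|.
\end{equation*}
(If a direction is unused, $E_i=\emptyset$ and it contributes nothing below; properness of the embedding in fact rules this out.)

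Next I would compute both invariants by double counting over the directions. For the Wiener index,
\begin{equation*}
W(G)=\sum_{\{u,v\}\subseteq V(G)}d_G(u,v)=\sum_{i=1}^n\bigl|\{\{u,v\}:u_i\ne v_i\}\bigr|=\sum_{i=1}^n|W_i^0|\,|W_i^1|,
\end{equation*}
and substituting $|W_i^1|=|E_i|$, $|W_i^0|=|V(G)|-|E_i|$ and $\sum_i|E_i|=|E(G)|$ gives $W(G)=|V(G)||E(G)|-\sum_i|E_i|^2$. For the Mostar index, take an edge $uv\in E_i$ with $u_i=0$; since $u$ and $v$ differ only in coordinate $i$, a vertex $w$ is strictly closer to $u$ precisely when $w_i=0$, so $n_{u,v}=|W_i^0|=|V(G)|-|E_i|$ and $n_{v,u}=|W_i^1|=|E_i|$. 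Hence $|n_{u,v}-n_{v,u}|=|V(G)|-2|E_i|$, the absolute value being resolved by the inequality above; summing over the $|E_i|$ edges of each class,
\begin{equation*}
\mathit{Mo}(G)=\sum_{i=1}^n|E_i|\bigl(|V(G)|-2|E_i|\bigr)=|V(G)||E(G)|-2\sum_{i=1}^n|E_i|^2.
\end{equation*}

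The genuinely delicate point --- and the only place the daisy-cube hypothesis is used, rather than mere isometry --- is the inequality $|E_i|\le|V(G)|/2$: it is what removes the absolute value from the Mostar sum, and it is false for arbitrary partial cubes. Since it follows at once from downward closedness via the injection $W_i^1\hookrightarrow W_i^0$, I expect no real obstacle; the remaining steps are the routine double counting displayed above. Applied here, Proposition~\ref{pr:Ei} then gives $|E_i|=F_i^pF_{n-i+1}^p$, so the theorem specializes to a closed form for $W(\Gamma_n^p)$ and $\mathit{Mo}(\Gamma_n^p)$.
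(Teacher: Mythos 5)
Your proof is correct: downward closedness of the daisy-cube vertex set gives the bijection $W_i^1\leftrightarrow E_i$ and hence $2|E_i|\le |V(G)|$, and the isometry of the proper embedding justifies both the Wiener double count $\sum_i|W_i^0||W_i^1|$ and the identification $n_{u,v}=|W_i^0|$, $n_{v,u}=|W_i^1|$ for an edge in $E_i$. The paper itself does not prove this statement but quotes it from~\cite[Corollary~4.1]{M-2022a}; your argument is essentially the standard cut ($\Theta$-class) double counting used there, so it is a faithful, self-contained reconstruction rather than a genuinely different route. The only quibble is the parenthetical claim that properness rules out an unused direction, which the paper's definition of proper embedding does not obviously guarantee; it is harmless anyway, since an empty $E_i$ contributes nothing to either formula.
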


Therefore using Proposition~\ref{pr:Ei} we obtain one of the expressions for $W(\Gamma_n^p)$ given in~\cite{WY-2022} and the value of $\mathit{Mo}(\Gamma_n^p)$.

 \begin{corollary}
\label{co:indices}
Let $p\geq 1$ and $n\geq 1$. Then the Wiener and the Mostar indices of $G$ are 
\begin{align*}
 W(\Gamma_n^p)&=F_{n+p+1}^p\sum_{i=1}^n{F_i^pF_{n-i+1}^p}-\sum_{i=1}^n{(F_i^pF_{n-i+1}^p)^2}\\
\mathit{Mo}(\Gamma_n^p)&=F_{n+p+1}^p\sum_{i=1}^n{F_i^pF_{n-i+1}^p}-2\sum_{i=1}^n{(F_i^pF_{n-i+1}^p)^2}\,.
\end{align*}
\end{corollary}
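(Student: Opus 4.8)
The plan is to derive Corollary~\ref{co:indices} by feeding the data already assembled in this section into Theorem~\ref{th:indices}. The one hypothesis of that theorem that needs checking is that $\Gamma_n^p$ is a daisy cube \emph{properly} embedded into $Q_n$. For this I would use the observation recorded above: if $u\in{\cal F}^p_n$ and $v\le u$, then $v$ arises from $u$ by switching some $1$s to $0$s, an operation that never brings two $1$s within distance $p$ of each other, so $v\in{\cal F}^p_n$ as well. Hence $V(\Gamma_n^p)$ is a down-set for $\le$, which means $\Gamma_n^p=Q_n(X)$ for $X$ the set of maximal Fibonacci $p$-strings of length $n$; the standard embedding of $\Gamma_n^p$ into $Q_n$ is therefore a proper embedding and Theorem~\ref{th:indices} applies.

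It then remains to substitute the relevant counts. From Section~\ref{sec:basic}, $|V(\Gamma_n^p)|=|{\cal F}^p_n|=F_{n+p+1}^p$; from Proposition~\ref{pr:Ei}, $|E_i|=F_i^pF_{n-i+1}^p$ for $i\in[1,n]$; and summing the latter over $i$ gives $|E(\Gamma_n^p)|=\sum_{i=1}^n F_i^pF_{n-i+1}^p$. Plugging these into the formulas $W(G)=|V(G)|\,|E(G)|-\sum_{i=1}^n|E_i|^2$ and $\mathit{Mo}(G)=|V(G)|\,|E(G)|-2\sum_{i=1}^n|E_i|^2$ of Theorem~\ref{th:indices} produces exactly the two displayed identities, the one for $W(\Gamma_n^p)$ recovering one of the expressions given in~\cite{WY-2022}.

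I do not expect any genuine obstacle here: the analytic content sits in Theorem~\ref{th:indices} (via~\cite{M-2022a}) and in Proposition~\ref{pr:Ei}, and what is left is bookkeeping. The only subtlety worth flagging is that Theorem~\ref{th:indices} is stated for \emph{proper} embeddings, so one should not skip the verification in the first paragraph that the natural model of $\Gamma_n^p$ inside $Q_n$ is of this form.
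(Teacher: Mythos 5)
Your proposal is correct and matches the paper's own route: the paper likewise obtains the corollary by combining Theorem~\ref{th:indices} with Proposition~\ref{pr:Ei}, the count $|V(\Gamma_n^p)|=F_{n+p+1}^p$, and the fact (established in Section~\ref{sec:distancecube} via the same down-set observation you make) that $\Gamma_n^p$ is a daisy cube generated by its maximal strings, so the natural embedding is proper. Your explicit verification of properness is exactly the detail the paper relies on implicitly.
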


%Let $G=(V(G),E(G))$ be a connected graph. For any edge $uv\in E(G)$, let $n_{u,v}$ denote the number of vertices wich are closer to $u$ than to $v$. The \emph{Mostar index} $\mathit{Mo}(G)$  is defined as
%$$ \mathit{Mo}(G) =\sum_{uv \in E(G)}|n_{u,v}-n_{v,u}|.$$
 
%%%%%%%%%%%%%%%%%%%%%%%%%%%%%%%%%%%%%%%%%%%%%%%%%%%%%%%%%%
%%%%%%%%%%%%%%%%%%%%%%%%%%%%%%%%%%%%%%%%%%%%%%%%%%%%%%%%%%

\section{Irregularity of Fibonacci $p$-cubes}
\label{sec:Irr}
%%%%%%%%%%%%%%%%%%%%%%%%%%%%%%%%%%%%%%%%%%%%%%%%%%%%%%%%%%
%%%%%%%%%%%%%%%%%%%%%%%%%%%%%%%%%%%%%%%%%%%%%%%%%%%%%%%%%%
Define the {\em imbalance} $\imb_G(e)$ of an edge $e = uv\in E(G)$ by  
$$\imb_G(e) = |\deg_G(u) - \deg_G(v)|\,.$$

The imbalance of an edge is thus a local measure of non-regularity. To measure graph's global non-regularity, different approaches have been proposed. One of the most natural such measures is the {\em irregularity} $\irr(G)$  introduced by Michael Albertson~\cite{A-1997} as follows: 
$$\irr(G) = \sum_{uv \in E(G)} |\deg_G(u) - \deg_G(v)| = \sum _{e \in E(G)} \imb_G(e)\,.$$

The irregularity of the Fibonacci cubes was first determined by Alizadeh, Deutsch and Klav\v{z}ar.
\begin{theorem} {\rm \cite[Theorem~4.1]{ADK-2020a}}
\label{thm:irregularity-of-Gamma_n} 
If $n\ge 1$, then 
$$\irr(\Gamma_n) =  2 |E(\Gamma_{n-1})|\,.$$
\end{theorem}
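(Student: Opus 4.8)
The plan is to evaluate $\irr(\Gamma_n)$ (recall $\Gamma_n=\Gamma_n^1$) by a purely local analysis of the imbalance of each edge, and then to match the total with $|E(\Gamma_{n-1})|$ through an explicit bijection. First I would record the degree of a vertex. For a Fibonacci string $u\in\Fib_n$, flipping a coordinate $j$ with $u_j=1$ to $0$ always yields a Fibonacci string, whereas a coordinate $j$ with $u_j=0$ may be flipped to $1$ exactly when its two neighbours are $0$ (with the boundary convention $u_0=u_{n+1}=0$). Writing $F(u)$ for the set of such admissible $0\to1$ flips, this gives
$$\deg_{\Gamma_n}(u)=w(u)+|F(u)|,\qquad F(u)=\{\, j\in[1,n] : u_{j-1}=u_j=u_{j+1}=0 \,\}.$$
Since every edge of $\Gamma_n$ joins a vertex $u$ to $v=u+\delta_i$ with $u_i=0$, $v_i=1$ and $i\in F(u)$, I would orient each edge from its lower-weight to its higher-weight endpoint; this puts $E(\Gamma_n)$ in bijection with the pairs $(u,i)$ where $u\in\Fib_n$ and $i\in F(u)$.

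Next I would compute the imbalance $\imb(e)=|\deg(u)-\deg(v)|$ of the edge $e$ attached to $(u,i)$. Passing from $u$ to $v$ raises the weight by $1$ and changes the flip-status of only the coordinates $i-1,i,i+1$: coordinate $i$ leaves $F$, and $i-1$ (resp.\ $i+1$), if it was admissible in $u$, ceases to be admissible in $v$ because its neighbour $i$ turned into a $1$; no coordinate can enter $F$, since the only altered bit changes a $0$ into a $1$. Hence $\deg(v)-\deg(u)=1-\bigl(1+[\,i-1\in F(u)\,]+[\,i+1\in F(u)\,]\bigr)$, so the imbalance is exactly $[\,i-1\in F(u)\,]+[\,i+1\in F(u)\,]$, the number of index-neighbours of $i$ that are themselves admissible flips of $u$. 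Summing over all edges,
$$\irr(\Gamma_n)=\sum_{u\in\Fib_n}\ \sum_{i\in F(u)}\bigl([\,i-1\in F(u)\,]+[\,i+1\in F(u)\,]\bigr)=2\sum_{u\in\Fib_n}\#\{\,i:\{i,i+1\}\subseteq F(u)\,\},$$
the factor $2$ arising because each consecutive pair $i,i+1\in F(u)$ is counted once from $i$ and once from $i+1$. The condition $\{i,i+1\}\subseteq F(u)$ unwinds to $u_{i-1}=u_i=u_{i+1}=u_{i+2}=0$, i.e.\ to an occurrence of the factor $0000$ in the boundary-extended string.

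Finally I would identify this count with $|E(\Gamma_{n-1})|$. The same orientation argument applied to $\Gamma_{n-1}$ gives $|E(\Gamma_{n-1})|=\sum_{w\in\Fib_{n-1}}|F(w)|$, which counts the pairs $(w,j)$ with $w_{j-1}=w_j=w_{j+1}=0$, i.e.\ occurrences of the factor $000$. I would then exhibit a bijection between the $0000$-occurrences in $\Fib_n$ and the $000$-occurrences in $\Fib_{n-1}$: send $(u,i)$ to $(w,i)$, where $w$ is obtained from $u$ by deleting coordinate $i+1$, a $0$ flanked by the zeros at positions $i$ and $i+2$, so that $w$ is again a Fibonacci string and $w_{i-1}=w_i=w_{i+1}=0$; the inverse reinserts a $0$ immediately after position $i$. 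Combining the three steps yields $\irr(\Gamma_n)=2|E(\Gamma_{n-1})|$.

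I expect the main obstacle to be the boundary bookkeeping in the two supporting claims: verifying that when $i\in F(u)$ only the coordinates $i-1,i,i+1$ change flip-status, and in exactly the stated way, and checking that the delete/insert bijection respects the conventions $u_0=u_{n+1}=0$ and $w_0=w_n=0$ at the extreme indices $i=1$ and $i=n-1$, where one of the four zeros of the window is a boundary symbol rather than a genuine coordinate. Once those edge cases are settled, the remainder is routine.
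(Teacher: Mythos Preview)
Your proof is correct and is essentially the $p=1$ specialization of the paper's bijective argument for Theorem~\ref{thm:irregularity-of-Gamma_np}: your observation that only the flip-status of $i-1,i,i+1$ changes is exactly Propositions~\ref{pr:zero}--\ref{pr:delta}, your pairs $(u,i)$ with $\{i,i+1\}\subseteq F(u)$ are the paper's right-imbalanced pairs $R_1$ (with $L_1$ accounting for the factor $2$), and your delete/insert bijection coincides with the map $\alpha$. The only cosmetic difference is that you phrase things via the admissible-flip set $F(u)$ and ``$0000$ versus $000$'' occurrences, whereas the paper speaks of imbalanced edges, but the underlying combinatorics is identical.
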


A bijective proof of this remarkably simple relation was given by Mollard~\cite{M-2021b}. In this section we will generalize this second approach to Fibonacci $p$-cubes and we prove the following theorem which is the corresponding result for them.

\begin{theorem}
\label{thm:irregularity-of-Gamma_np} 
If $n\ge p$, then 
$$\irr(\Gamma_n^p) =  2 \sum_{d=1}^p|E(\Gamma_{n-d}^p)|\,.$$
\end{theorem}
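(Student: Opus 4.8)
The plan is to adapt the bijective/combinatorial argument that Mollard used for the classical case $p=1$. The key observation for Fibonacci $p$-cubes is that the degree of a vertex $u \in \Fib_n^p$ equals $n$ minus the number of "blocked" coordinates, where coordinate $i$ is blocked for $u$ if flipping $u_i$ produces a string that is no longer a Fibonacci $p$-string. Concretely, if $u_i = 1$ then $i$ is never blocked (setting it to $0$ keeps the string admissible); if $u_i = 0$, then $i$ is blocked precisely when setting $u_i = 1$ would create a $1$ within distance $p$ of an existing $1$. So $\deg_G(u) = n - \beta(u)$ where $\beta(u)$ counts blocked (necessarily $0$-) coordinates. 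For an edge $e = uv$ with $v = u + \delta_i$ and, say, $u_i = 0$, $v_i = 1$, the imbalance $\imb_G(e) = |\beta(u) - \beta(v)|$. First I would establish a clean formula for $\beta(u) - \beta(v)$ in terms of the local pattern of $u$ around coordinate $i$: flipping $u_i$ from $0$ to $1$ changes the blocked status only of coordinates in the window $[i-p, i+p]$, so the imbalance is controlled entirely by the $2p$ neighbouring bits (truncated at the boundary of the string).

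Next I would set up the counting of $\irr(\Gamma_n^p) = \sum_{e} \imb_G(e)$ by grouping edges according to the direction $i$ they use, exactly as in Proposition~\ref{pr:Ei}. For a fixed direction $i$, an edge using direction $i$ corresponds to a pair $(u, v)$ with $u_i = 0$, $v_i = 1$, and both in $\Fib_n^p$; equivalently, to a Fibonacci $p$-string $v$ with $v_i = 1$, together with the requirement that $v - \delta_i \in \Fib_n^p$ (which is automatic). So the edges using direction $i$ are indexed by $\{v \in \Fib_n^p : v_i = 1\}$. The imbalance of such an edge depends only on whether the coordinates just left of $i$ (at positions $i-1, \dots, i-p$) and just right of $i$ (at positions $i+1, \dots, i+p$) are all $0$ — which they must be, since $v$ is a Fibonacci $p$-string with $v_i=1$ — so actually the relevant data is what lies just outside that forced window: whether positions $i-p-1$ and $i+p+1$ (when they exist) carry a $1$, since those are the coordinates whose blocked-status toggles when $v_i$ changes. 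I would compute the imbalance in each of the (at most) four cases and then sum over $v$.

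The heart of the computation is then a bijection or generating-function identity showing that the resulting sum collapses to $2\sum_{d=1}^{p} |E(\Gamma_{n-d}^p)|$. The natural approach: partition the edges of $\Gamma_n^p$ contributing nonzero imbalance according to the position of the "rightmost obstruction" relative to the flipped coordinate, i.e. for an edge in direction $i$ with nonzero imbalance, there is a nearest $1$ to the right (or the right boundary of the string within distance $p$), at some offset $d \in [1,p]$; deleting the block $10^{d-1}$ (or the corresponding boundary segment) around that configuration should biject the edge with an edge of $\Gamma_{n-d}^p$, and the factor $2$ should come from a left/right symmetry (each edge of $\Gamma_{n-d}^p$ being hit once "from the left" and once "from the right"). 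I expect the main obstacle to be handling the boundary cases cleanly — when $i$ is within distance $p$ of position $1$ or position $n$, the truncation of the window changes the imbalance, and these boundary edges are exactly the ones that should account for the terms with small $n-d$; making the bijection uniform across interior and boundary edges, and verifying it is well-defined and invertible in every case, is where the real work lies. A generating-function alternative, using Theorem~\ref{thm:cubepolyfibgene} or Corollary~\ref{co:gen-funct-ck} together with an expression for $\sum_i \imb$-contributions, could serve as a cross-check or as a replacement if the bijection proves too delicate.
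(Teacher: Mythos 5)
Your overall strategy (analyse locally which coordinate-flips become admissible when the $1$ in direction $i$ is removed, then use deletion bijections onto edge sets of smaller Fibonacci $p$-cubes, one class for each $d\in[1,p]$, with a left/right factor $2$) is in the same spirit as the paper's proof, but two concrete steps of your plan fail for $p\ge 2$. First, the local analysis: for an edge $xy$ with $x_i=1$, $y=x+\delta_i$, a direction $j=i+d$ with $1\le d\le p$ contributes to $\imb(xy)$ exactly when $y+\delta_{i+d}\in\Fib^p_n$ (the condition $x+\delta_{i+d}\notin\Fib^p_n$ is automatic because of the $1$ at $i$), and this holds iff $x_k=0$ for all $k\in[i+p+1,\,i+p+d]\cap[1,n]$. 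So the right-hand contribution is \emph{not} determined by the single bit at position $i+p+1$ (nor the left one by $i-p-1$): it equals the number of offsets $d$ whose window $[i+p+1,i+p+d]$ is free of $1$s, i.e.\ it depends on the exact distance from $i$ to the nearest $1$ (or to the end of the string) anywhere up to $i+2p$. Your reduction to "four cases governed by $x_{i\pm(p+1)}$" is only valid for $p=1$; also note that the coordinates whose status toggles are those in $[i-p,i+p]$, not $i\pm(p+1)$.

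Second, and more seriously, your final accounting does not compute $\irr$. You propose to \emph{partition the edges} with nonzero imbalance by the position of the nearest obstruction and biject each class with $E(\Gamma^p_{n-d})$, with the factor $2$ coming from each target edge being hit once from the left and once from the right. Any scheme that assigns each imbalanced edge to one class counts that edge once, hence computes the \emph{number} of imbalanced edges, whereas $\irr(\Gamma_n^p)=\sum_e\imb(e)$ and a single edge can contribute up to $p$ on each side (for $x=0^{2p}10^{2p}$ the edge $x\,,x+\delta_{2p+1}$ has imbalance $2p$). The object that must be classified is the set of \emph{incidences}: pairs consisting of an edge $e$ in direction $i$ together with a direction $i\pm d$ that is admissible at the lower endpoint but not at the upper one, classified by the offset $d$ of that direction — not by the position of the obstruction. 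This is exactly the paper's device: it forms, for each $d\in[1,p]$, the sets $R_d$ and $L_d$ of such pairs, bijects $R_d$ with $E(\Gamma^p_{n-d})$ by deleting the $d$ coordinates between the $1$ at $i$ and the flip at $i+d$ (and symmetrically for $L_d$), and the factor $2$ comes from $|R_d|=|L_d|$, not from a two-to-one map. With your parametrisation (nearest obstruction at $i+p+d$) the class would contribute with multiplicity about $d-1$, so the claimed bijection of that class with $E(\Gamma^p_{n-d})$ cannot hold; repairing the plan essentially forces you back to the incidence-counting set-up of the paper.
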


Let $G$ be an induced subgraph of $Q_n$. Let $e=xy$ with $x_i=1$ and $y_i=0$. An edge $e'=y(y+\delta_j)$ of $G$ will be called an \emph{imbalanced edge for} $e$ if $x+\delta_j \notin V(G)$ (and thus $x(x+\delta_j)\notin E(G)$). Note that such couple of edges does not exist for $G= Q_n$. We will prove that, for $G=\Gamma_n^p$,  the irregularity is the number of imbalanced edges.   
\begin{proposition}\label{pr:zero}
Let $x,y$ be two strings in $\Fib_n^p$ with $y=x+\delta_i$ and $x_i=1$. Then for all $j\in[1,n]$ we have
 $$ x+\delta_j\in \Fib_n^p \text { implies }y+\delta_j\in \Fib_n^p.$$
 \end{proposition}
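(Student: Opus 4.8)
The plan is to argue directly about the local structure of the strings. We have $x,y\in\Fib_n^p$ with $y=x+\delta_i$ and $x_i=1$, so $y_i=0$ and $y$ and $x$ agree everywhere else. Suppose $j\in[1,n]$ is such that $x+\delta_j\in\Fib_n^p$; we must show $y+\delta_j\in\Fib_n^p$. The key observation is that being a Fibonacci $p$-string is a condition on the positions of the $1$s: for every pair of positions $k<\ell$ carrying a $1$ with no $1$ strictly between them, we need $\ell-k\geq p+1$. So I would compare the set of $1$-positions of $y+\delta_j$ with that of $x+\delta_j$.

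First I would dispose of the case $j=i$: then $y+\delta_i=x\in\Fib_n^p$ and there is nothing to prove, so assume $j\neq i$. Now there are two cases according to whether $x_j=0$ or $x_j=1$. If $x_j=1$, then flipping coordinate $j$ turns a $1$ into a $0$ in both $x$ and $y$; since $y$ differs from $x$ only by having an \emph{additional} $0$ at position $i$, the set of $1$-positions of $y+\delta_j$ is obtained from that of $x+\delta_j$ by deleting (at most) the element $i$, hence is a subset of it, and a subset of a valid set of $1$-positions is valid (erasing $1$s from a Fibonacci $p$-string yields a Fibonacci $p$-string, as already noted in the daisy-cube discussion). Thus $y+\delta_j\in\Fib_n^p$.

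The substantive case is $x_j=0$, so that both $x+\delta_j$ and $y+\delta_j$ are obtained by turning the $0$ at position $j$ into a $1$. Here the set of $1$-positions of $y+\delta_j$ is $\bigl(S(x)\setminus\{i\}\bigr)\cup\{j\}$, where $S(x)$ denotes the $1$-positions of $x$, while that of $x+\delta_j$ is $S(x)\cup\{j\}$. Since $x+\delta_j\in\Fib_n^p$, the set $S(x)\cup\{j\}$ already satisfies the gap condition; removing the element $i$ from it can only \emph{enlarge} the gaps around $i$ (two consecutive surviving $1$-positions that straddle $i$ have difference at least the sum of the two original gaps, hence still $\geq p+1$), while all other consecutive pairs are unchanged. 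Therefore $\bigl(S(x)\setminus\{i\}\bigr)\cup\{j\}$ also satisfies the gap condition, i.e.\ $y+\delta_j\in\Fib_n^p$. I do not expect a real obstacle here; the only thing to be careful about is the bookkeeping of consecutive $1$-positions when the element $i$ is deleted, in particular when $i$ is the smallest or largest $1$-position, or when $i$ is adjacent (in the list of $1$-positions) to $j$ — in every such subcase deleting $i$ only merges or removes constraints and never creates a new violated one.
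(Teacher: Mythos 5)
Your proof is correct, and it takes a somewhat different route from the paper's. You argue directly: for $j\neq i$ the $1$-positions of $y+\delta_j$ form a subset of those of $x+\delta_j$ (because $y\le x$ coordinatewise), and the Fibonacci $p$-condition is hereditary under deleting $1$s, which you verify with the gap-merging bookkeeping; in effect your argument proves the statement for any downward-closed vertex set of $Q_n$ (i.e.\ any daisy cube), with $\Fib_n^p$ as a special case. The paper instead argues by contraposition and stays local: if $y+\delta_j\notin\Fib_n^p$, then the flip at $j$ must create a $1$ at $j$ together with some $k\neq j$, $|k-j|\le p$, with $y_k=1$; since every $1$ of $y$ is a $1$ of $x$ and $x_j=y_j=0$, the same pair $(j,k)$ witnesses $x+\delta_j\notin\Fib_n^p$. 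Both proofs hinge on the same fact that $y\le x$; yours buys generality and makes the daisy-cube connection explicit, while the paper's is shorter and needs no discussion of merged gaps because it only has to exhibit one violating pair rather than re-verify all consecutive pairs of $1$-positions.
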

\begin{proof}
 This is true for $j=i$. Assume $j \neq i $  and $y+\delta_j\notin \Fib_n^p$. Then, since $y\in \Fib_n^p$, $(y+\delta_j)_j=1$  and $y_k=1$ for some $k\in[1,n]$ with  $|k-j|\leq p$ and $k \neq j$. Therefore $y_j=0$ and thus  $x_j=0$.   But for all $ p\in[1,n]$ $x_p=0$ implies  $y_p=0$. Thus $x_k=1$  and, since   $(x+\delta_j)_j=1$ ,  $x+\delta_j\notin \Fib_n^p$.
\end{proof}\qed
\begin{proposition}\label{pr:delta}
Let $x,y$ be two strings in $\Fib_n$ with $y=x+\delta_i$.
Then for all $j\in[1,n]$ with $|i-j|>p$ we have $$x+\delta_j\in \Fib_n^p\text { if and only if }y+\delta_j\in \Fib_n^p.$$
 \end{proposition}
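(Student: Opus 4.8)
The plan is to exploit the previous proposition together with its mirror image. Proposition \ref{pr:zero} already gives us one direction under no extra hypothesis: if $x+\delta_j \in \Fib_n^p$ then $y+\delta_j \in \Fib_n^p$, where $y = x+\delta_i$ and $x_i = 1$. The remaining task is the converse, $y+\delta_j \in \Fib_n^p \Rightarrow x+\delta_j \in \Fib_n^p$, and here the hypothesis $|i-j| > p$ must be used. First I would handle the trivial normalization: the statement is symmetric in $x$ and $y$ only up to the rôle of the flipped bit at position $i$, so I would fix the convention $x_i = 1$, $y_i = 0$ (the case $x_i=0$, $y_i=1$ follows by swapping the names of $x$ and $y$, since then $x = y+\delta_i$ with $y_i=1$, and we still have $|i-j|>p$). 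With that fixed, Proposition \ref{pr:zero} is exactly the forward implication.

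For the converse, suppose $y+\delta_j \in \Fib_n^p$ but $x+\delta_j \notin \Fib_n^p$. Since $x \in \Fib_n^p$ and flipping bit $j$ broke the $p$-string condition, it must be that $(x+\delta_j)_j = 1$ (so $x_j = 0$) and there is some $k$ with $x_k = 1$, $k \neq j$, and $|k-j| \le p$. Now I want to reach a contradiction with $y+\delta_j \in \Fib_n^p$. The key observation is that passing from $x$ to $y$ only changed coordinate $i$, from $1$ to $0$; since $|i-j| > p$, the coordinate $i$ is "far" from $j$ and from the whole forbidden window $[j-p, j+p]$ — more precisely $i \notin [j-p,j+p]$ — so for every index $\ell$ with $|\ell - j| \le p$ we have $x_\ell = y_\ell$, and also $x_j = y_j = 0$. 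In particular $y_k = x_k = 1$ with $|k-j|\le p$, $k\neq j$, and $(y+\delta_j)_j = 1$, which forces $y+\delta_j \notin \Fib_n^p$, a contradiction. Hence $x+\delta_j \in \Fib_n^p$, and together with Proposition \ref{pr:zero} this gives the equivalence.

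The step I expect to require the most care is making precise exactly which coordinates are "protected" by the condition $|i-j|>p$. The violation of the $p$-string property at $j$ is always witnessed by a $1$ at distance at most $p$ from $j$ (together with the newly created $1$ at $j$ itself); the hypothesis $|i-j|>p$ guarantees that $i$ is not such a witness position and, more importantly, that the only coordinate where $x$ and $y$ differ lies outside the interval $[j-p,j+p]$, so the local picture around $j$ is identical for $x+\delta_j$ and $y+\delta_j$. Once that is spelled out, both implications are immediate and the proof is a couple of lines; no computation beyond comparing coordinates in a window of radius $p$ is needed. A minor point to state cleanly is the boundary behaviour near the ends of the string (indices outside $[1,n]$ simply do not exist), but this causes no difficulty since we only ever quantify over $k,\ell \in [1,n]$.
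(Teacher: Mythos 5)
Your proof is correct and rests on the same key observation as the paper's: since $|i-j|>p$, the coordinates in the window of radius $p$ around $j$ are identical in $x$ and $y$ (they differ only at $i$), so the condition for legally creating a $1$ at position $j$ is the same for both strings. The only cosmetic difference is that you route the forward implication through Proposition~\ref{pr:zero} (which forces the normalization $x_i=1$), whereas the paper proves both directions at once by the same symmetric window comparison, making that normalization unnecessary.
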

\begin{proof}
This is immediate for $j=i$ thus assume $j \neq i$.
\begin{itemize}
\item If $x_j=1$  then $y_j=x_j=1$ and both $x+\delta_j$ and $y+\delta_j$ belong to $\Fib_n^p$.
\item Assume  $x_j=0$ thus $y_j=0$. We have
$$x+\delta_j\in\Fib_n^p\text{ if and only if } x_{k}=0 \text{ for all } k\in [1,n] \text{ with } |k-j|\leq p$$ and
$$y+\delta_j\in\Fib_n^p\text{ if and only if } y_{k}=0 \text{ for all } k\in [1,n] \text{ with } |k-j|\leq p.$$
But $|i-j|>p$ thus $x_k=y_k$ for all $k$ with $|k-j|\leq p$ and the two conditions are equivalent.
\end{itemize}
\end{proof} \qed

Let $e=xy$ be an edge of $\Fib_n^p$ with $x_i=1$ and $y_i=0$. The degree of $y$ is the number of $j\in [1,n]$ such that $y+\delta_j\in \Fib_n^p$ thus from Proposition~\ref{pr:zero} $$\imb(e) = |\deg(x) - \deg(y)|= \deg(y) - \deg(x) $$  and $\imb(e)$ is the number of imbalanced edges for $e$.

Let $e'=y(y+\delta_j)$ be an imbalanced edge for $e$. Then from Proposition~\ref{pr:delta} $j=i+d$ or $j=i-d$ for some $d$ with $1\leq d\leq p$. If $i<d+1$ or $i>n-d$, then clearly only one case is possible. We say that $e'$ is a 
$d$-{\em right imbalanced edge} for $e$ if $j=i+d$, otherwise $e'$ is a $d$-{\em left imbalanced edge} for $e$. 
For $d\in[1,p]$ let 
$$R_{d} = \{(e,e') \st  e\in E(\Gamma_n^p),\ e'\ {\rm is\ a\ }d{\rm-right\ imbalanced\ edge\ for}\ e\}$$
and
$$L_{d} = \{(e,e') \st  e\in E(\Gamma_n^p),\ e'\ {\rm is\ a\ }d{\rm-left\ imbalanced\ edge\ for}\ e\}\,.$$
We now claim that $|R_{d}| = |E(\Gamma_{n-d}^p)|$. Let 
$$\alpha: R_{d} \rightarrow E(Q_{n-d})$$ 
be defined as follows. If $(e,e')\in R_{d}$, where $e=xy$ with $x_i = 1$, $y_i= 0$, note that $i\leq n-d$ furthermore $x_{i+k}=0$ for $k\in [1,d-1]$. Then set 
$$\alpha((e,e'))= (x_1\ldots x_{i-1}1x_{i+d+1}\dots x_{n})(x_1\dots x_{i-1}0x_{i+d+1}\dots x_{n})\,.$$
Since $e$ is an edge in $\Gamma_n^p$  then $x_{i-k}=0$ for $k\leq p$. Similarly, because $e'$ uses direction $i+d$, $x_{i+d+k}=0$ for $k\leq p$. Therefore $x_1\ldots x_{i-1}1x_{i+d+1}\dots x_{n}$ is a Fibonacci $p$-string and $\alpha((e,e'))\in E(\Gamma_{n-d}^p)$.

Conversely consider an edge $z(z+\delta_i)$ of $\Gamma_{n-d}^p$. and let $x$ and $y$ binary strings of length $n$ defined by 
\begin{align*}
&x_l=y_l=z_l \text{ for } 1 \leq l \leq i-1\\
&x_l=y_l=0 \text{ for } i+1 \leq l \leq i+d-1\\
&x_l=y_l=z_{l-d} \text{ for } i+d+1\leq l \leq n\\
 &x_i=1, y_i=0, x_{i+d}=0, y_{i+d}=1\,.
\end{align*}
It is immediate too verify that $x$, $y$ and $y+\delta_{i+d}$ are Fibonacci $p$-strings but not $x+\delta_{i+d}$,  and thus $(x(x+\delta_i),y(y+\delta_{i+d})) \in R_{d}$.
Therefore  the edge $z(z+\delta_i)$ is the image by $\alpha$ of a couple of edges in $R_{d}$. 
By the properties of imbalanced edges no other choice is possible for $x$ and $y$, thus $\alpha$ is one to one.

Similarly, by a symmetrical argument,  $|L_{d}| = |E(\Gamma_{n-d}^p)|$ and Theorem~\ref{thm:irregularity-of-Gamma_np} follows.
\section{Conclusion}

It is interesting to note that, in many ways,  binary strings can be seen as Fibonacci 0-strings and $\Gamma_{n}^0$ as $Q_n$. Indeed setting p=0 in all theorems of this paper gives results well known for hypercubes like $|E(Q_n)|=n 2^{n-1}$, $C_{Q_n}(x)=(2+x)^n$, $D_{Q_n}(x,q)=(1+x+q)^n$, $W(Q_n)=n2^{2n-2}$, $\mathit{Mo}(Q_n)=0$ or $\irr(Q_n)=0$. The Fibonacci $0$-numbers $(F_n^0)_{n\geq1}$ being $(2^{n-1})_{n\geq1}$.

\bibliographystyle{plain}  

\bibliography{fibo-bib}

\end{document}